\newcommand{\Z}{\mathbb{Z}}
\newcommand{\Q}{\mathbb{Q}}
\renewcommand{\Im}{\text{Im}}
\renewcommand{\epsilon}{\varepsilon}
\DeclareMathOperator{\rank}{rank}
\DeclareMathOperator{\coker}{coker}
\DeclareMathOperator{\supp}{supp}
\newtheorem{theorem}{Theorem}
\newtheorem{lemma}[theorem]{Lemma}
\newtheorem{proposition}[theorem]{Proposition}
\newtheorem{claim}[theorem]{Claim}
\theoremstyle{definition}
\newtheorem{conjecture}[theorem]{Conjecture}
\newtheorem{definition}[theorem]{Definition}
\title{Abelian groups from random hypergraphs}
\author{Andrew Newman\thanks{Carnegie Mellon University}}
\begin{document}
\maketitle
\abstract
For a $k$-uniform hypergraph $\mathcal{H}$ on vertex set $\{1, ..., n\}$ we associate a particular signed incidence matrix $M(\mathcal{H})$ over the integers. For $\mathcal{H} \sim \mathcal{H}_k(n, p)$ an Erd\H{o}s--R\'{e}nyi random $k$-uniform hypergraph, $\coker(M(\mathcal{H}))$ is then a model for random abelian groups. Motivated by conjectures from the study of random simplicial complexes we show that for $p = \omega(1/n^{k - 1})$, $\coker(M(\mathcal{H}))$ is torsion-free.
\section{Introduction}
Let $\mathcal{H}$ be a $k$-uniform hypergraph on vertex set $[n]$. We associate to $\mathcal{H}$ a matrix $M := M(\mathcal{H})$ with rows indexed by the vertices and columns indexed by the edges of $\mathcal{H}$. For $e = \{v_1, v_2, ..., v_k\}$ each $v_i \in [n]$ with $v_1 < v_2 < \cdots < v_k$, $M(v_i, e) := (-1)^{i + 1}$ for $1 \leq i \leq k$ and $M(v, e) := 0$ for $v \notin e$. Our primary object of study is the cokernel of $M(\mathcal{H})$ when $\mathcal{H}$ is a random hypergraph. 

The motivation for this model comes from a question about random simplicial complexes. Recall that the Linial--Meshulam--Wallach model, introduced in \cite{LM, MW}, denoted $Y_d(n, p)$ is the probability space on $d$-dimensional simplicial complexes on $n$ vertices with complete $(d - 1)$-skeleton sampled by including each $d$-face independently with probability $p$. Observe that $Y_1(n, p)$ is the Erd\H{o}s--R\'enyi random graph model. Since the Linial--Meshulam--Wallach model was first introduced much of the research has been to establish thresholds for topological properties that generalize thresholds for graph properties. One of the most important thresholds in $Y_d(n, p)$ is the one-sided sharp threshold for nonvanishing of the $d$th homology group due to Aronshtam, Linial, and Peled \cite{AL, LP}. Namely for each $d \geq 2$, there is an explicit constant $c_d$ defined in \cite{AL} so that if $p = c/n$ with $c < c_d$ then the $d$th homology group of $Y \sim Y_d(n, p)$ with rational coefficients is generated by Poisson-distributed embedded copies of the $(d + 1)$-simplex boundary in $Y_d(n, p)$ \cite{LP}.  On the other hand for $c > c_d$, \cite{AL} shows that with high probability $Y \sim Y_d(n, p)$ has $d$th Betti number of order $\Theta(n^d)$ with high probability. 

While we don't yet have a good understanding of what happens inside the critical window for $p$ near $c_d/n$, experiments conducted by Kahle, Lutz, Newman, and Parsons \cite{KLNP} demonstrate strongly that the homology within the critical window is very interesting. More specifically, the experiments conducted in \cite{KLNP} witness a \emph{torsion burst} in the $(d - 1)$st homology group within the critical window. An instance of the experiment in \cite{KLNP} starts with the complete graph on $n$ vertices, and a 2-complex is constructed from this graph by adding triangles one at a time in random order. That is, at each step one picks uniformly at random a triangle on the ground set that is not already included in the complex and adds it.  In this way we turn $Y_2(n, p)$ into a stochastic process and can study how the two interesting homology groups evolve. Early on, most of the time a new triangle is added, the free rank of $H_1$ drops by one, but occasionally a tetrahedron boundary is completed and so the rank of $H_2$ increases by one instead. Moreover, at the early stages $H_1$ is torsion-free. However, right around the critical density for the phase transition in $H_2$ established in \cite{AL, LP}, one observes large torsion in the first homology group. In a particular instance shown as Table 1 in \cite{KLNP} with $n = 75$, when the 2470th triangle is added, torsion appears in the first homology group. At that point in this experiment the first homology group was $\Z^{235} \times \Z/2\Z$.  What is even more interesting is that when the 2475th triangle is added in this particular run the torsion part of the first homology group has order larger than $10^{26}$, but when one more triangle is added this torsion drops to $\Z/2\Z$. The details of which groups appear can be found in Table 1 of \cite{KLNP}. Once this torsion is gone, we have apparently passed the homology threshold of \cite{AL, LP} and the rank of the second homology group starts to grow quickly.

As the torsion in the $(d - 1)$st homology group of a simplicial complex comes from the cokernel of the $d$th boundary matrix, it seemed reasonable when conducting the experiments for \cite{KLNP} to try and see if the same torsion burst behavior occurred in a random matrix model. The $d$th boundary matrix of a simplicial complex (when choosing orientations from an ordering on the vertices) is a matrix in which each column has exactly $(d + 1)$ nonzero entries and those nonzero entries alternate as $1, -1, 1, -1, ..., (-1)^{d + 1}$. From this perspective when working on \cite{KLNP} it made sense to run experiments to see what happens in a model of random matrices that have this structure (but not the far more restrictive structure coming from the geometry of a simplicial complex), i.e. the matrices described above as $M(\mathcal{H})$ for $\mathcal{H}$ a $(d + 1)$-uniform hypergraph on $[n]$. These experiments turned out to exhibit the same behavior as $Y_d(n, p)$. 

Random matrices with a fixed number of nonzero entries in each row have been considered in the past and \cite{LP} in particular mentions their similarity with $Y_d(n, p)$. For example Pittel and Sorkin \cite{PittelSorkin} consider this same model for random matrices we describe here over $\Z/2\Z$ in studying random instances of $k$-XORSAT and Cooper, Frieze, and Pegden \cite{CooperFriezePegden} prove that a random matrix over $\Z/2\Z$ with exactly $k$ 1's in each column exhibit a phase transition in the rank. The phase transitions described in \cite{PittelSorkin, CooperFriezePegden} are perfectly analogous to the phase transition in homology of $Y_d(n, p)$ established by \cite{AL, LP} in quite a strong sense. In $Y_d(n, c/n)$ with $c$ constant, if $c$ is large enough that the average degree of a $(d - 1)$-dimensional face exceeds $c_d$ then $Y \sim Y_d(n, c/n)$ asymptotically almost surely has nonvanishing $d$th homology while for $c$ small enough that the average degree of $(d - 1)$-dimensional face is smaller than $c_d$ then $d$-homology is generated by $(d + 1)$-simplex boundaries. For this same $c_d$, as is pointed out in \cite{LP}, Pittel and Sorkin show that a random $n \times m$ matrix over $\Z/2\Z$ with exactly $d + 1$ 1's in each column will have nontrivial kernel when the average number of 1's in each \emph{row} exceeds $c_d$ and trivial kernel when this average is below $c_d$. Work of Cooper, Frieze, and Pegden \cite{CooperFriezePegden} refines this result to describe the asymptotic rank of the random $\Z/2\Z$ matrix on either side of the phase transition. It is at the phase transition of \cite{PittelSorkin} that experiments witness a torsion burst in our random matrix model. 

As with the experiments for \cite{KLNP} to see this torsion burst it seems necessary to view the cokernel of $M(\mathcal{H})$ for $\mathcal{H}$ a random hypergraph as a stochastic process. That is we add columns one at a time and compute the cokernel at each step. The results of a sample run are shown in Table \ref{tbl:SampleRun}. The experiment was carried out using GAP \cite{GAP}.

\begin{table}[h]
\centering
\begin{tabular}{c | c}
Number of columns & Cokernel \\
\hline
0 & $\Z^{100}$ \\
1 & $\Z^{99}$ \\
$\vdots$ & $\vdots$ \\
94 & $\Z^6$ \\
95 & $\Z^5 \times \Z/2\Z$ \\
96 & $\Z^4 \times \Z/6\Z$ \\
97 & $\Z^3 \times \Z/894\Z$\\
98 & $\Z^3 \times \Z/3\Z$\\
99 & $\Z^3 \times \Z/3\Z$ \\
100 & $\Z^3$ \\
$\vdots$ & $\vdots$ \\
125 & $\Z$ \\
126 & $0$ \\
\end{tabular}
\caption{Sample run of the random abelian group process with $n = 100$ and $k = 3$} \label{tbl:SampleRun}
\end{table}

Proving the existence of the torsion burst in $Y_d(n, p)$ seems to be quite a difficult problem without obvious tools to approach it. More tractable perhaps is proving its uniqueness. This is formulated as a conjecture of \L uczak and Peled in \cite{LP2}.
\begin{conjecture}[\L uczak and Peled \cite{LP2}]\label{LuczakPeledConjecture}
For every $d \geq 2$ and $p = p(n)$ such that $|np - c_d|$ is bounded away from 0, $H_{d - 1}(Y_d(n, p))$ is torsion-free asymptotically almost surely.
\end{conjecture}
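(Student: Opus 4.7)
The plan is to split $p$ into a subcritical regime ($np < c_d - \delta$) and a supercritical regime ($np > c_d + \delta$) and mirror, as far as possible, the strategy for the matrix model $M(\mathcal{H})$ that the present paper develops. The essential additional difficulty is that the boundary $\partial_d$ of $Y$ has $\binom{n}{d}$ rows---indexed by the $(d - 1)$-faces---rather than the $n$ rows of $M(\mathcal{H})$, and its column-sparsity pattern is forced by the geometric constraint that the boundary of a $d$-face is a cycle of length $d + 1$ in the $(d - 1)$-skeleton.

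In the subcritical regime, I would exploit the Aronshtam--Linial--Peled structure theorem \cite{AL, LP}: a.a.s.\ every rational $d$-cycle of $Y$ is a combination of boundaries of embedded $(d + 1)$-simplices, and the number of these simplices is Poisson with bounded mean. After conditioning on their positions and removing, for each such simplex, one column of $\partial_d$ corresponding to one of its $d$-faces (which preserves the cokernel, since the removed column is a $\Z$-linear combination of the others), I would argue that the residual matrix $\partial_d'$ is \emph{$d$-collapsible}: iterated elementary collapses---each deleting a $d$-face together with a free $(d - 1)$-face of which it is the unique parent---reduce the support of $\partial_d'$ to a subcomplex with no $d$-faces while preserving $H_{d - 1}$. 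Since the remaining complex is $(d - 1)$-dimensional, its $H_{d - 1}$ is a subgroup of the free abelian group on the surviving $(d - 1)$-faces and is therefore torsion-free. In the supercritical regime I would mimic the resampling strategy for $M(\mathcal{H})$: for each prime $q$, any $q$-torsion in $H_{d - 1}(Y; \Z)$ forces the rank of $\partial_d$ over $\Z / q \Z$ to drop below its rational value; revealing the $d$-faces in two stages and resampling the second, I would show that the fresh randomness a.a.s.\ restores full $\Z/q\Z$-rank for any fixed $q$, and then union-bound over primes using the Hadamard bound $|\mathrm{Tors}\, H_{d - 1}(Y)| \leq n^{O(n^d)}$.

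The main obstacle is the subcritical collapsibility step. The structure theorem of \cite{LP} pins down the rational homology, and $c_d$ is often described as the threshold for $d$-collapsibility of $Y \sim Y_d(n, c/n)$, but translating that into an a.a.s.\ statement for the residual complex $Y'$ (after simplex-boundary columns are deleted) uniformly over subcritical $p$ seems to require a finer enumeration of small topological obstructions, most likely combined with switching or coupling arguments sharp enough to separate the subcritical regime cleanly from the critical window. In the supercritical regime the analogous obstacle is controlling the rank-expansion rate of $\partial_d \bmod q$ uniformly in $q$ so that the union-bound loss over primes $q \leq n^{O(n^d)}$ remains affordable, and one must verify that the rigid geometric structure of $\partial_d$'s columns---each of weight exactly $d + 1$ with support constrained by the $(d - 1)$-skeleton---does not obstruct generic rank increase under resampling.
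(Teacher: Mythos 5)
The statement you were given is labeled in the paper as a \emph{conjecture} of \L uczak and Peled \cite{LP2}; the paper does not prove it and in fact presents Theorem~\ref{maintheorem} for the hypergraph matrix model $M(\mathcal{H})$ precisely because the conjecture itself remains open (even the weaker claim, $p = \omega(1/n)$ implies $H_{d-1}(Y_d(n,p))$ torsion-free, is only suggested as ``plausible'' in the concluding remarks). So there is no proof of this statement in the paper to compare your argument against, and your submission is correctly framed as a proposal with acknowledged gaps rather than a proof.

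Beyond the gaps you flag, the subcritical half of your plan rests on a premise that is false. You write that ``$c_d$ is often described as the threshold for $d$-collapsibility,'' and you use this to argue that after deleting one $d$-face from each embedded $(d+1)$-simplex the residual complex is $d$-collapsible, hence has torsion-free $H_{d-1}$. But the $d$-collapsibility threshold of $Y_d(n, c/n)$ established in \cite{ALLM, AL2} is a \emph{strictly smaller} constant than the homology-vanishing constant $c_d$ of \cite{AL, LP}. In the intermediate regime, with $np$ strictly between the two constants, $Y$ is a.a.s.\ not $d$-collapsible: it retains a core of $\Theta(n^d)$ faces that cannot be collapsed away, even though $H_d(Y;\Q)$ is still generated only by simplex boundaries. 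Removing the $O_{\mathrm{Pois}}(1)$ simplex-boundary faces does nothing to this core, so the residual complex is still not collapsible, and the collapse-to-$(d-1)$-dimensional argument cannot rule out torsion there. Any proof covering the full subcritical window $np < c_d - \delta$ must handle this non-collapsible regime by other means --- most likely a counting argument in the spirit of the cocycle bounds used here (Lemma~\ref{SmallCocyclesLemma}) and in \cite{LP2, NP}. On the supercritical side, the obstacle you name is also quantitatively more severe than you may intend: $\partial_d$ has $\binom{n}{d}$ rows, so the Soul\'e-type bound (Lemma~\ref{HowManyPrimes}) gives $|\mathrm{Tors}\,H_{d-1}(Y)| \leq \sqrt{d+1}^{\,\binom{n}{d}} = \exp(\Theta(n^d))$, and the union bound over primes then demands a per-prime failure probability of $\exp(-\Omega(n^d))$, far stronger than the $\exp(-Cn)$ that the coisoperimetric estimate in the matrix model (Lemma~\ref{CoisoperimetryOddK}) delivers.
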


We point out that for $Y \sim Y_d(n, p)$, $H_{d - 1}(Y)$ a.a.s. does not vanish until $p = \frac{d \log n}{n}$. This result was proved for fixed field coefficients in \cite{LM, MW}, and for integer coefficients in the $d = 2$ case in \cite{LP2} and in the general case in \cite{NP}.  Thus Conjecture \ref{LuczakPeledConjecture} would provide a probability regime where $H_{d - 1}(Y)$ is nonvanishing and torsion-free. 

In light of this conjecture and the comparison between random matrices over $\Z/2\Z$ with $(d + 1)$ 1's in every row and $Y_d(n, p)$, we prove the following as our main theorem.
\begin{theorem}\label{maintheorem}
For any $k \geq 3$, if $p = \omega(\frac{1}{n^{k - 1}})$ then the cokernel of $M(\mathcal{H})$ for $\mathcal{H} \sim \mathcal{H}_k(n, p)$ asymptotically almost surely is torsion-free. 
\end{theorem}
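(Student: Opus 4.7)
My plan is to translate torsion-freeness of $\coker(M(\mathcal{H}))$ into a statement about the rank of $M(\mathcal{H})$ modulo every prime and then verify that statement uniformly. By Smith normal form, $\coker(M(\mathcal{H}))$ is torsion-free iff $\rank_{\mathbb{F}_q}M(\mathcal{H}) = \rank_{\Q}M(\mathcal{H})$ for every prime $q$; equivalently, every $u\in\mathbb{F}_q^n$ in the mod-$q$ left kernel of $M(\mathcal{H})$ is the reduction mod $q$ of some $\tilde{u}\in\ker_\Z M(\mathcal{H})^\top$. Call a mod-$q$ kernel element without such a lift \emph{spurious}; the theorem then asserts that a.a.s.\ no prime $q$ admits a spurious $u$.

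First I would identify $\ker_\Z M(\mathcal{H})^\top$. In this regime I expect a.a.s.\ that this kernel is generated by the indicators $E_v$ of isolated vertices (which contribute only free $\Z$-summands to $\coker$) and, when $k$ is even, the all-ones vector $\mathbf{1}$ (using $\sum_{i=1}^k(-1)^{i+1}=0$ for even $k$); any further integer kernel element would impose a nontrivial linear constraint simultaneously on every edge of $\mathcal{H}$, which a direct estimate rules out for random $\mathcal{H}$ of expected size $\omega(n)$. Next, for each prime $q$ and each candidate spurious $u\in\mathbb{F}_q^n$, set $N_q(u) := |\{e\in\binom{[n]}{k} : u^\top c_e\not\equiv 0\pmod q\}|$; by edge-independence, $\Pr(u\in\ker_{\mathbb{F}_q}M(\mathcal{H})^\top) \leq (1-p)^{N_q(u)}$. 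The central combinatorial estimate is a dichotomy: either $u$ comes from an integer kernel element (hence is not spurious) or $N_q(u) = \Omega(n^k)$. This would be proved by partitioning $[n]$ according to the coloring $v\mapsto u_v\in\mathbb{F}_q$ and verifying that, outside the exceptional partitions (those corresponding to twin-pair, constant, or isolated-vertex-indicator kernel elements, all of which lift to $\Z$), a positive fraction of $k$-subsets has nonzero alternating color sum mod $q$. Combined with $p=\omega(1/n^{k-1})$, this gives $\Pr(u\in\ker) \leq \exp(-\omega(n))$ per fixed spurious $u$.

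The principal obstacle is making the union bound uniform over all primes simultaneously. I plan to split into two regimes. For $q$ bounded by a slowly growing function of $n$, a direct union bound over the $O(q^{n-1})$ projective classes of $u\in\mathbb{F}_q^n$ and over such $q$ is absorbed by the $\exp(-\omega(n))$ factor above. For large primes, I would use a magnitude argument: lift any spurious $u$ to $\tilde{u}\in\Z^n$ with $\|\tilde{u}\|_\infty \leq q/2$ and observe that the entries of $\tilde{u}^\top M(\mathcal{H})$, being bounded in absolute value by $kq/2$ and divisible by $q$, can take only $O(k)$ distinct values; this sorts spurious vectors into finitely many combinatorial ``types'' parameterised by bounded integer data, each handled by a variant of the small-prime argument. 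The most delicate step --- and the expected main technical difficulty --- is establishing the dichotomy and $\Omega(n^k)$ lower bound on $N_q(u)$ uniformly over all colorings and primes, particularly for $u$ whose support lies in a small number of color classes, where one must carefully distinguish genuinely spurious $u$ from those already arising from the integer kernel via twins or near-isolated vertices.
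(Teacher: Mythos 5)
Your high-level framework --- reduce to $\rank_{\mathbb{F}_q}M^T = \rank_{\Q}M^T$ for every prime $q$, and bound the probability that a mod-$q$ kernel vector fails to lift --- matches the paper's. The paper also splits by support size, and for vectors of support $\geq \delta n$ it proves a coisoperimetry estimate $|B_q(v)| \geq \gamma\,|\supp(v)|^k$ (Lemma \ref{CoisoperimetryOddK}) that is the analogue of your claimed bound $N_q(u)=\Omega(n^k)$. So your intuition for the large-support case is on the right track.

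However, there are two concrete gaps. First, the claimed dichotomy (``either $u$ lifts, or $N_q(u)=\Omega(n^k)$'') is false as stated. Any $u$ of support size $s$ satisfies $N_q(u)\leq s\binom{n}{k-1}=O(sn^{k-1})$, since every edge contributing to $N_q(u)$ must meet $\supp(u)$; for $s=o(n)$ this is $o(n^k)$ regardless of whether $u$ lifts. For $k$ even the situation is worse still: any $u$ that agrees with a nonzero constant $x$ on all but $o(n)$ coordinates (e.g.\ $x\mathbf{1}+e_1$) has full support yet $N_q(u)=N_q(u-x\mathbf{1})=O(n^{k-1})$. Second, even where the $\Omega(n^k)$ bound does hold, the union bound over $u\in\mathbb{F}_q^n$ cannot be absorbed. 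With $p=f(n)/n^{k-1}$ and $f\to\infty$ arbitrarily slowly, the per-vector bound is $\exp(-\Theta(fn))$, while the number of kernel candidates is $q^{\Theta(n)}$; absorbing this requires $\log q = o(f)$, which already fails once $q$ exceeds a function growing slower than $e^{f}$, let alone the primes up to $\sqrt{k}^n$ that actually need to be ruled out (Lemma \ref{HowManyPrimes}). Your ``magnitude argument'' for large $q$ is too vague to close that gap.

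The paper sidesteps both problems. For small support it avoids union-bounding over vectors and primes entirely: a minimal torsion cocycle supported on a vertex set $W$ forces a $q$-\emph{independent} structural condition on $\mathcal{H}$ (no edge meets $W$ exactly once, at least $|W|$ edges meet $W$; see Claim \ref{RankClaim}), and a first-moment count over subsets $W$ (Lemma \ref{SmallCocyclesLemma}) rules this out. For large support it does not sum $(1-p)^{N_q(u)}$ over $u$; instead it tracks $\dim\ker_{\Z/q\Z}(M_i^T)$ as rows are added one at a time and applies a Chernoff bound to the number of dimension drops (Lemma \ref{LargeCocyclesLemmaOdd}), giving a single $e^{-Cn}$ per prime. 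Uniformity over primes then comes from the fact that only primes $q\leq\sqrt{k}^n$ can divide the torsion, and $C>\log k$ beats the resulting $\sqrt{k}^n$-term union bound. Finally, the even-$k$ obstruction you noticed (the $\mathbf{1}$ vector) is handled by showing that any large-support, non-$\epsilon$-balanced $v$ becomes small-support after subtracting a suitable multiple of $\mathbf{1}$, folding it into the small-support case. To salvage your plan you would need, at minimum, (i) a $q$-free argument for small support, and (ii) a replacement for the per-vector union bound in the large-support case.
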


We remark also that the absence of torsion far below the phase transition, i.e. for $p = o(1/n^{k - 1})$, follows from facts about the \emph{2-core} of $\mathcal{H}_k(n, p)$. Recall that the 2-core of a hypergraph $\mathcal{H}$ is the hypergraph $\mathcal{H'}$ obtained by successively deleting vertices (and hyperedges containing them) belonging to fewer than two hyperedges. Note that if $\mathcal{H}$ is a $k$-uniform hypergraph and $v \in \mathcal{H}$ is contained in exactly one hyperedge $e$ then deleting $v$ and $e$ from $\mathcal{H}$ does not change $\coker(M(\mathcal{H}))$. One the other hand if $v \in \mathcal{H}$ does not belong to any hyperedges at all then deleting $v$ from $\mathcal{H}$ removes a $\Z$ factor from $\coker(M(\mathcal{H}))$. Therefore the torsion part of $\coker(M(\mathcal{H}))$ is the torsion part of $\coker(M(\mathcal{H}'))$ where $\mathcal{H}'$ is the 2-core of $\mathcal{H}$. 

One of the main results of a paper of Molloy \cite{Molloy} establishes the sharp phase transition for the property that $\mathcal{H} \sim \mathcal{H}_k(n, p)$ for $k \geq 3$ has a nontrivial 2-core to be $\gamma_k/n^{k - 1}$ for an explicit constant $\gamma_k$. In fact the threshold that he establishes for this property is fundamentally the same as the threshold for $d$-collapsibility in the Linial--Meshulam--Wallach model established by \cite{ALLM, AL2} furthering the analogy between $Y_d(n, c/n)$ and $M(\mathcal{H}_k(n, c/n^{k - 1}))$. Because of Molloy's result about the 2-core and the connection between the 2-core and the torsion part of the cokernel we immediately have the following.
\begin{proposition}\label{sparseside}
For $k \geq 3$, if $p = o(\frac{1}{n^{k - 1}})$ then the cokernel of $M(\mathcal{H})$ if $\mathcal{H} \sim \mathcal{H}_k(n, p)$ asymptotically almost surely is torsion-free. 
\end{proposition}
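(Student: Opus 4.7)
The plan is to combine two inputs, both essentially laid out in the remarks preceding the proposition: an invariance statement for $\coker(M(\mathcal{H}))$ under the reductions defining the $2$-core, and Molloy's sharp threshold for the $2$-core of $\mathcal{H}_k(n,p)$.

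First I would formalize the reduction. Viewing the cokernel as $\Z^{V(\mathcal{H})}/\langle \text{columns of } M(\mathcal{H})\rangle$, suppose a vertex $v$ is contained in exactly one hyperedge $e$. Then the column of $M(\mathcal{H})$ indexed by $e$ is $\pm e_v + \sum_{u\in e\setminus\{v\}} \pm e_u$, so modulo the column span the basis vector $e_v$ is expressible in terms of the remaining $e_u$. Since $v$ appears in no other column, projecting the other columns onto $\Z^{V\setminus\{v\}}$ simply yields the columns of $M(\mathcal{H}\setminus\{v,e\})$, and a short check gives $\coker(M(\mathcal{H}))\cong \coker(M(\mathcal{H}\setminus\{v,e\}))$. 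A vertex of degree $0$ contributes a free $\Z$ summand that can be peeled off, preserving the torsion subgroup. Iterating these two reductions yields exactly the $2$-core $\mathcal{H}'$, and the torsion part of $\coker(M(\mathcal{H}))$ equals the torsion part of $\coker(M(\mathcal{H}'))$.

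Next I would invoke Molloy's theorem: for $k\ge 3$ the property ``$\mathcal{H}_k(n,p)$ has a nonempty $2$-core'' has a sharp threshold at $p=\gamma_k/n^{k-1}$. Under $p=o(1/n^{k-1})$ we have $pn^{k-1}\to 0$, so $p$ lies below this threshold and $\mathcal{H}'=\emptyset$ a.a.s. When $\mathcal{H}'$ is empty, $M(\mathcal{H}')$ is an empty matrix and its cokernel is a free abelian group (of rank equal to the number of vertices surviving the peeling process), which is torsion-free. Combined with the previous step this proves the proposition.

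There is no real obstacle: the row/column manipulation is a routine Smith normal form computation (made clean by the fact that the pivot entries are $\pm 1$), and the nontrivial probabilistic content--existence and location of the $2$-core threshold--is imported verbatim from \cite{Molloy}. The only thing to double-check is that Molloy's result is stated in a form that covers $p=o(1/n^{k-1})$, but since the threshold is sharp at a constant multiple of $1/n^{k-1}$ this is automatic.
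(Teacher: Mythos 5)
Your proposal is correct and follows exactly the paper's intended argument: the paper derives Proposition \ref{sparseside} immediately from the observation (stated in the remarks before the proposition) that the torsion of $\coker(M(\mathcal{H}))$ equals the torsion of $\coker(M(\mathcal{H}'))$ for $\mathcal{H}'$ the 2-core, together with Molloy's sharp threshold at $\gamma_k/n^{k-1}$ showing the 2-core is a.a.s.\ empty when $p = o(1/n^{k-1})$. Your more detailed verification of the peeling step (the $\pm 1$ pivot argument) is just a fleshed-out version of what the paper asserts without proof.
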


Also because of the comparison between $Y_d(n, p)$ and $M(\mathcal{H}_k(n, p))$ we prove the following analogue to the homology vanishing threshold in our random matrix model. This proof will follow immediately from Theorem \ref{maintheorem} and a result about the $\Z/2\Z$ version of the random matrix model from \cite{CooperFriezePegden}. 
\begin{theorem}\label{GroupVanishes}
For any $k \geq 3$, if $p = \frac{c \log n}{n^{k - 1}}$ then for $c < k!$ the cokernel of $M(\mathcal{H})$ for $\mathcal{H} \sim \mathcal{H}_k(n, p)$ asymptotically almost surely has free rank at least 1 when $k$ is odd and at least 2 when $k$ is even, while for $c > k!$ the cokernel of $M(\mathcal{H})$ for $\mathcal{H} \sim \mathcal{H}_k(n, p)$ asymptotically almost surely is $0$ if $k$ is odd and is $\Z$ if $k$ is even.
\end{theorem}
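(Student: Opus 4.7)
The plan is as follows. Because $p = c \log n / n^{k-1} = \omega(1/n^{k-1})$, Theorem \ref{maintheorem} guarantees that $\coker(M(\mathcal{H}))$ is a.a.s.\ torsion-free, so it is isomorphic to $\Z^{r}$ for some $r \geq 0$ depending on $\mathcal{H}$. The strategy is to pin down $r$ by comparing the integer and $\Z/2\Z$ ranks of $M$, and then to read off the answer from the rank theorem for the random $\Z/2\Z$-matrix model of \cite{CooperFriezePegden}.

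The reduction to $\Z/2\Z$ is formal once the cokernel is known to be torsion-free. Tensoring an isomorphism $\coker(M) \cong \Z^{r}$ with $\Z/2\Z$ and using right-exactness of $-\otimes \Z/2\Z$ yields
\[
(\Z/2\Z)^{r} \;\cong\; \coker(M) \otimes \Z/2\Z \;\cong\; (\Z/2\Z)^{n}/\Im(M \bmod 2),
\]
so $r = n - \rank_{\Z/2\Z}(M \bmod 2)$. Thus, conditional on Theorem \ref{maintheorem}, the free rank of the integer cokernel is determined entirely by the $\Z/2\Z$-rank of the $\{0,1\}$ vertex-hyperedge incidence matrix of $\mathcal{H}$.

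Now $M \bmod 2$ is precisely the $\Z/2\Z$ random matrix model studied in \cite{CooperFriezePegden}, in which each column has exactly $k$ ones. Two trivial parity observations bound the maximum possible rank: when $k$ is even every column of $M \bmod 2$ sums to $0$, so $\mathbf{1}^{T}(M \bmod 2) = 0$ and hence $\rank_{\Z/2\Z}(M \bmod 2) \leq n - 1$; when $k$ is odd no such obstruction exists and the rank is at most $n$. What I would invoke from \cite{CooperFriezePegden} is the statement that, at the logarithmic density $p = c\log n / n^{k-1}$, these upper bounds are a.a.s.\ attained precisely when $c > k!$ and a.a.s.\ fall short by at least one when $c < k!$.

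Plugging the resulting values into $r = n - \rank_{\Z/2\Z}(M \bmod 2)$ then gives the theorem: for $c > k!$, one finds $r = 0$ and hence $\coker(M) = 0$ when $k$ is odd, while $r = 1$ combined with torsion-freeness forces $\coker(M) \cong \Z$ when $k$ is even; for $c < k!$, one obtains $r \geq 1$ when $k$ is odd and $r \geq 2$ when $k$ is even, exactly as claimed. The entire argument is formal once Theorem \ref{maintheorem} and the appropriate CFP input are in hand, so the only real obstacle is locating the precise statement from \cite{CooperFriezePegden} at the logarithmic threshold and matching its parameterization to the one used here.
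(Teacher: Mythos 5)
Your proposal is correct and takes essentially the same route as the paper: it combines Theorem \ref{maintheorem} (torsion-freeness at this density) with the $\Z/2\Z$-rank dichotomy from Theorem 1.3 of \cite{CooperFriezePegden} (the paper's Theorem \ref{CFPResult}), plus the parity observation that $\rank_{\Z/2\Z}(M)\leq n-1$ when $k$ is even. Your identification $r=n-\rank_{\Z/2\Z}(M)$ via right-exactness of $-\otimes\Z/2\Z$ is just a streamlined phrasing of the paper's comparison of $\rank_{\Q}$ and $\rank_{\Z/2\Z}$, and if anything it makes the use of torsion-freeness in the $c<k!$ direction more explicit than the paper does.
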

Note that Theorem \ref{maintheorem} and Theorem \ref{GroupVanishes} implies that there is a probability regime where $\coker(M(\mathcal{H}))$ is torsion-free, but also nontrivial.

\section{Outline of the proof of Theorem \ref{maintheorem}}
%The proof of Theorem \ref{maintheorem} proceeds by showing that for $M(\mathcal{H})$ and $\mathcal{H}$ a random $k$-uniform hypergraph there cannot be a vector $v$ with integer entries so that $Mv \neq 0$, but $Mv \pmod q = 0$ for some prime $q$ unless $v \pmod q = 0$. That this suffices is fairly clear by the following lemma.
Since the cokernel of an $n \times m$ matrix $M$ is $\Z^n/\Im(M)$, a torsion element for $\coker(M)$ is an integer vector $w$ so that $w$ is not in the image of $M$ but $tw$ is in the image of $M$ for some $t \geq 2$. Obviously it suffices to rule out $q$-torsion over all primes $q$ i.e. to show that for all primes $q$ there is never an integer vector $w$ so that $qw \in \Im(M)$ but $w \notin \Im(M)$, however there is an important subtlety. We need to show that in the probability regime considered we can bound the probability that there is $q$-torsion \emph{simultaneously over all primes $q$}. It would not be enough to fix $q$ and show that the probability of $q$-torsion is $o(1)$. Such an approach would leave open the possibility that there is $q(n)$-torsion in our random model for some \emph{sequence} of primes $q(n)$ growing with $n$. 

We first state the key lemmas to sketch out the arguments. It turns out to be easier to rule out torsion in $\coker(M^T)$ rather than directly ruling out torsion for $\coker(M)$. This is similar to how \cite{LM, MW, LP2, NP} prove cohomology vanishing theorems rather than directly proving homology vanishing theorems. This formulation is equivalent since the torsion coefficients of an integer matrix come from its Smith normal form, the torsion part of $\coker(M^T)$ is the same as the torsion part of $\coker(M)$ for any matrix. 

%%%%%%%%%%%%%%%%%%%%%%%%%%%%%%%%
\begin{comment}
\begin{lemma}\label{basicstrat}
If $M$ is an $n \times m$ matrix and $q$ is a prime so that for every $v \in \Z^m \setminus (q\Z)^m$, $Mv = 0$ if and only if $Mv \pmod q = 0$ then $\coker(M)$ has no $q$-torsion.
\end{lemma}
\begin{proof}
 If $\coker(M)$ has $q$-torsion for some prime $q$ then there is an integer vector $w \in \Z^n$ so that $w \notin \Im(M)$ but $qw \in \Im(M)$. In this case then there is an integer vector $v$ so that $Mv = qw$. Therefore $Mv \pmod q = 0$, but then $Mv = 0$ unless $v \pmod q = 0$. If $v \pmod q = 0$ then $\frac{1}{q}v$ is still an integer vector but $M(\frac{1}{q}v) = w$, and $w$ was assumed not to be in the image of $M$.
\end{proof}
The approach to prove Theorem \ref{maintheorem} will be to show that in the probability regime considered the assumptions of Lemma \ref{basicstrat} hold simultaneously for every prime $q$ with high probability. The cases for the argument to show that with high probability the assumption of Lemma \ref{basicstrat} hold split depending on the size of the \emph{support} of the vector $v$ in the statement. Furthermore, the argument has to change slightly depending on the parity of $k$. 
\end{comment}
%%%%%%%%%%%%%%%%%%%%%%%%%%%%%%%%%

The idea of the proof is to show that with high probability there are no $w$, $v$, $q$ so that $M^Tv = qw$ with $q$ prime, $v \notin (q\Z)^n$, and $w \notin \Im(M^T)$. The argument splits depending on the size of the support of $v$. The first key lemma has to do with the case that the support of $v$ is small. Here we introduce the notation $M_S$ for $S$ a subset of the columns of $M$ to be the submatrix obtained by restricting $M$ to the columns belonging to $S$. 
\begin{lemma}\label{newSmallCocycles}
Fix $\delta \in (0, 1)$ then for $\mathcal{H} \sim \mathcal{H}_k(n, p)$ with $p = \omega(1/n^{k - 1})$ asymptotically almost surely for every set $S \subseteq [n]$ with $|S| < \delta n$,  $\rank_{\Q}(M^T_S) = \rank_{\Z/q\Z}(M^T_S)$ for every prime $q$ where $M = M(\mathcal{H})$. 
\end{lemma}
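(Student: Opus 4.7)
\medskip\noindent\textbf{Proof plan.} My plan is to prove the stronger statement that, with high probability, for every $S \subseteq [n]$ with $|S| < \delta n$, the Smith normal form of $M^T_S$ has only $\pm 1$ as its nonzero elementary divisors. This immediately implies $\rank_\Q(M^T_S) = \rank_{\Z/q\Z}(M^T_S)$ for every prime $q$ simultaneously, since the ranks can only differ modulo $q$ when some elementary divisor is divisible by $q$. The main tool is a peeling argument: call $v \in S$ \emph{private in $S$} if there is an edge $e \in \mathcal{H}$ with $e \cap S = \{v\}$. The corresponding row of $M^T_S$ is zero except for a $\pm 1$ at column $v$; using this as a pivot shows that $M^T_S$ is Smith-equivalent to $\operatorname{diag}(\pm 1) \oplus M^T_{S \setminus \{v\}}$, and iterating, if $S$ admits a complete peeling then the Smith normal form of $M^T_S$ has all nonzero elementary divisors equal to $\pm 1$. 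Peelability fails only at some residual $T \subseteq S$ with $|T| \geq 2$ for which no edge of $\mathcal{H}$ satisfies $|e \cap T| = 1$; call such $T$ \emph{obstructing}. It therefore suffices to show that w.h.p.\ no obstructing $T$ with $2 \leq |T| < \delta n$ actually forces a non-unit elementary divisor in $M^T_T$.

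For a fixed $T$ of size $s$, being obstructing requires all $s\binom{n-s}{k-1}$ potential edges meeting $T$ in exactly one vertex to be absent, an event of probability $(1-p)^{s\binom{n-s}{k-1}} \leq \exp(-c_{k,\delta}\,s\,\omega)$ for $\omega := p n^{k-1} \to \infty$ and $c_{k,\delta} > 0$. The naive union bound
\[ \E[\#\{\text{obstructing } T \text{ of size } s\}] \leq \binom{n}{s}\, e^{-c_{k,\delta} s\omega} \leq \exp\!\bigl(s[\log(en/s) - c_{k,\delta}\omega]\bigr) \]
sums to $o(1)$ over $s$ as long as $\omega$ grows faster than $\log(n/s)$, which is automatic whenever $s$ is at least polynomial in $n$. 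The main obstacle is the sub-polynomial regime of $s$, since $p = \omega(1/n^{k-1})$ only ensures $\omega = \omega(1)$, not $\omega = \omega(\log n)$, so the naive bound does not directly close when $s$ is, say, constant.

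To handle small $s$ I would refine the argument by observing that an obstructing $T$ alone does not force a non-unit elementary divisor. Because the entries of $M^T_T$ lie in $\{0, \pm 1\}$, the first elementary divisor $d_1$ equals $1$ whenever $M^T_T$ is nonzero, so any rank deficit requires $\rank_\Q(M^T_T) \geq 2$, and this in turn forces $\mathcal{H}$ to contain at least two ``internal'' edges $e$ with $|e \cap T| \geq 2$. This adds a factor of order $(s^2 p n^{k-2})^2 = O(s^4 \omega^2/n^2)$ to the probability bound; for $s = 2$ it already gives $\E[\#\text{rank-deficient pairs}] = O(\omega^2 e^{-2c\omega}) = o(1)$. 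For larger $s$ still in the small regime one iterates: a higher $\Q$-rank demands more linearly independent internal edges (each contributing a further factor of $\omega/n$), together with stringent sign constraints forcing the relevant $r \times r$ minors to share a common prime factor $q$. The $r \times r$ minors of $M^T_T$ are bounded integers for each fixed $T$, so only finitely many primes can ever cause a deficit at $T$ and the union bound over $q$ is tractable. The hardest technical point will be organizing this refinement uniformly across all $s$ in the small regime, so that the combined bound $\binom{n}{s}\cdot e^{-c_{k,\delta}s\omega}\cdot(\text{internal-edge penalty})$ sums to $o(1)$ after taking a further union over primes $q$.
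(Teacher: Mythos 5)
Your peeling reduction is sound and is in substance the paper's own starting point: the sets that resist peeling are exactly the sets met by no hyperedge in a single vertex, and only such residuals can carry a non-unit elementary divisor. The gap is in the refinement you propose for the range where the naive bound fails, and it is twofold. First, that range is far larger than you state: $\binom{n}{s}e^{-c_{k,\delta}s\omega}$ is only small when $\log(en/s)<c_{k,\delta}\omega$, i.e.\ when $s\gtrsim n e^{-c\omega}$; since the hypothesis only gives $\omega\to\infty$ arbitrarily slowly, essentially every $s$ from $2$ up to $n^{1-o(1)}$ must be handled by the refinement, so no penalty factor with a bounded exponent can beat $\binom{n}{s}$. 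Second, and more fundamentally, the step ``higher $\Q$-rank demands more internal edges'' never ties the number of internal edges to $s$: an obstructing $T$ of \emph{any} size $s$ can force a non-unit elementary divisor while having $\Q$-rank exactly $2$ and only two internal edges (two hyperedges through a common pair with opposite sign patterns, the remaining $s-2$ vertices lying in no edge that meets $T$ exactly once, e.g.\ isolated vertices). Hence the best penalty your scheme supplies is $O\bigl((s^{2}\omega/n)^{2}\bigr)$ uniformly in $s$, and $\binom{n}{s}e^{-cs\omega}(s^{2}\omega/n)^{2}$ already diverges at $s=3$ once $\omega=o(\log n)$. This is not merely a lossy estimate: the expected number of obstructing, rank-discrepant sets of size $s\ge 3$ genuinely diverges in this regime, because a single rare bad pair is contained in roughly $n^{s-2}$ bad $s$-sets; so no sharpening of a first moment taken over \emph{all} bad sets of a given size can close the argument.

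The missing idea is to count only inclusion-minimal bad sets, which is exactly the paper's device of minimal torsion cocycles: if some $S$ has $\rank_{\Q}(M^T_S)>\rank_{\Z/q\Z}(M^T_S)$, pass to an inclusion-minimal subset with this property; minimality forces $\rank_{\Q}$ to equal the number of columns (the paper's Claim \ref{RankClaim}), hence at least $|S|$ hyperedges meeting $S$, in addition to the no-single-intersection property you already use. With at least $s$ internal edges required, and at most $N=O(s^{2}n^{k-2})$ potential internal edges, the per-size term becomes $(en/s)^{s}\,(eNp/s)^{s}\,e^{-cs\omega}=\bigl(O(\omega)e^{-c\omega}\bigr)^{s}$, which sums to $o(1)$ for every $\omega\to\infty$. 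Note also that the ruled-out configuration is prime-free, so in the small-support case no union over primes $q$ is needed at all (that union is only required for the large-support lemma); your bookkeeping of sign constraints and common prime factors of minors is unnecessary once the minimality reduction is in place, and without that reduction (or an equivalent restriction to the essential support of the dependency) your combined bound cannot close.
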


For the case that the support of $v$ is large the argument splits depending on the parity of $k$. For the case that $k$ is odd the argument is a bit more straightforward. The issue with $k$ is even is that the row sum along every row of $M^T$ is zero, so the all-ones vector is always in the kernel of $M^T$ which makes the argument slightly more difficult. The lemma to handle the odd case is the following. Note that here we use $\mathcal{H}_k(n, m)$ the uniform distribution on $k$-uniform hypergraphs with exactly $m$ hyperedges.

\begin{lemma}\label{LargeCocyclesLemmaOdd}
For any $k \geq 3$ odd, $\delta > 0$, and $c = c(k, \delta)$ a sufficiently large constant there exists $C = C(k, c, \delta) > \log(k)$ so that for any prime $q$ the probability that $\ker_{\Z/q\Z}(M^T)$ for $M = M(\mathcal{H})$, $\mathcal{H}_k(n, cn)$ contains a vector of support size at least $\delta n$ is at most $e^{-Cn}$.
\end{lemma}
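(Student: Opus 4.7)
The plan is to run a first-moment / union-bound argument in the spirit of Meshulam--Wallach-style cohomology-vanishing proofs. First I fix a prime $q$ and apply the projective union bound:
\[
\Pr[\exists v] \;\leq\; \sum_{s=\lceil \delta n\rceil}^{n} \binom{n}{s}(q-1)^{s-1}\cdot\max_{|\supp v|=s}\Pr[v\in \ker_{\Z/q\Z}(M^T)],
\]
where the sum is over representatives of projective classes in $\mathbb{P}((\Z/q\Z)^n)$ of support size $s$. Because the $m=cn$ edges of $\mathcal{H}_k(n,cn)$ are sampled uniformly without replacement from $\binom{[n]}{k}$ and $m \ll \binom{n}{k}$, for each fixed $v$ I have
\[
\Pr[v\in \ker_{\Z/q\Z}(M^T)] \;\leq\; (1+o(1))^m P_v^m,
\]
where $P_v$ is the fraction of sorted $k$-subsets $e=\{w_1<\cdots<w_k\}\subseteq[n]$ for which $\sum_i (-1)^{i+1}v(w_i)\equiv 0\pmod q$.

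The quantitative heart of the argument will be an estimate of the form $P_v\leq 1-\alpha$ for a constant $\alpha=\alpha(k,\delta)>0$, uniform in $v$ with $|\supp v|\geq \delta n$ and uniform in $q$. The hypothesis that $k$ is odd gets used cleanly here: any edge $e$ with $|e\cap \supp v|=1$ has alternating sum $(-1)^{r+1}v(u)\not\equiv 0\pmod q$ (since $q$ is prime and $v(u)\neq 0$ in $\Z/q\Z$), so singleton-intersection edges never satisfy the kernel condition. Their total count is $s\binom{n-s}{k-1}$, giving a positive-constant fraction of $\binom{n}{k}$ throughout $s/n\in[\delta,1-\eta]$ for any fixed $\eta$. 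In the remaining $s/n\to 1$ regime I would further analyze edges with $|e\cap \supp v|\geq 2$; a Fourier/character-sum estimate shows $P_v\approx 1/q$ when the values of $v$ are spread across many residue classes of $\Z/q\Z$, while the ``structured'' $v$'s (nearly constant on their supports, etc.) for which such an estimate fails form a thin subset whose contribution to the union bound can be controlled by a separate, sharper count.

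Combining these estimates with $c=c(k,\delta)$ large enough yields $\Pr[\exists v]\leq e^{-Cn}$ with $C>\log k$, but only for $q$ below an exponentially-large threshold depending on $c$ — the main obstacle being that the factor $(q-1)^{s-1}$ in the projective count is $q$-dependent while the simple-minded bound $P_v\leq 1-\alpha$ is not, so without more input the union bound is only uniform in $q$ up to this threshold. For $q$ above the threshold I would use the fact that every $n\times n$ minor of $M^T$ is an integer of absolute value at most $k^{n/2}$ (a Hadamard-type bound on a $\pm 1,0$-matrix with exactly $k$ nonzero entries per column): for such $q$, any nonzero integer minor is automatically nonzero modulo $q$, so $\ker_{\Z/q\Z}(M^T)=\ker_\Z(M^T)\otimes \Z/q\Z$, and the statement reduces to controlling large-support vectors in the integer kernel — a $q$-free and quantitatively simpler analogue of the argument above.
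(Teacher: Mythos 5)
Your architecture is genuinely different from the paper's, and as written it has a real gap precisely at the point the lemma is designed to address: uniformity over \emph{all} primes $q$. With only the uniform estimate $P_v \le 1-\alpha$ (with $\alpha$ independent of $q$), your projective union bound costs a factor of order $(q-1)^{s-1} \ge (q-1)^{\delta n - 1}$, so the total is roughly $2^n q^n (1-\alpha)^{cn}$; since $c$ is a constant, this is $e^{-Cn}$ only for $q \le e^{O(c)}$, i.e.\ a \emph{constant} threshold in $n$, not an exponentially large one. Your Hadamard-type reduction only takes over at $q > k^{n/2}$, which grows with $n$, so the entire intermediate range $e^{O(c)} < q < k^{n/2}$ is covered by neither regime. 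The Fourier/character-sum dichotomy you invoke ($P_v \approx 1/q$ for ``spread'' $v$, plus a sharper count of ``structured'' $v$) is exactly the missing technical heart needed to bridge this range, and it is only gestured at; relatedly, your stated use of the hypothesis that $k$ is odd is misplaced --- edges meeting $\supp(v)$ in a single vertex contribute a nonzero alternating sum for \emph{every} $k$, whereas oddness is genuinely needed for (nearly) constant vectors of large support (for even $k$ the all-ones vector lies in $\ker_{\Z/q\Z}(M^T)$ for every $q$, so the lemma is false; any correct argument must visibly use oddness in the ``structured'' case you defer). The large-$q$ reduction to the integer kernel is fine in principle but the remaining ``$q$-free analogue'' is also not supplied (one would, e.g., lift a large-support kernel vector, pick a polynomially bounded prime not dividing its nonzero entries, and reinvoke the small-$q$ case).

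For contrast, the paper avoids the union bound over vectors entirely, which is what makes uniformity in $q$ painless. It proves a coisoperimetric inequality (Lemma \ref{CoisoperimetryOddK}): for odd $k$ there is $\gamma(k)>0$, independent of $q$, such that every $v$ has at least $\gamma|\supp(v)|^k$ admissible rows $w$ with $w\cdot v \ne 0$ over $\Z/q\Z$ (splitting into $\epsilon$-balanced and non-balanced $v$, the latter being where oddness is used). Then it reveals the $cn$ rows one at a time: as long as a kernel vector of support $\ge \delta n$ survives, each new row reduces $\dim\ker_{\Z/q\Z}$ with probability at least $k!\delta^k\gamma$, and since the dimension can drop at most $n$ times, Chernoff gives probability at most $e^{-Cn}$ with $C$ arbitrarily large in $c$, uniformly in $q$. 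If you want to salvage your route, you would need to carry out the structured/unstructured decomposition quantitatively (with a $q$-free count of structured vectors and a $P_v \lesssim 1/q$ bound otherwise); but the paper's process argument is both shorter and automatically uniform in $q$.
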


Note here that the $C$ in the statement does not depend on $q$. This is important because the following lemma tells us that we only have to consider exponentially many primes. This lemma is well known, and it seem that the earliest reference to it is a paper of cite Soul\'e \cite{Soule}. It appears in papers about vanishing homology theorems for random complexes. In particular it appears with proof as Claim 2 of \cite{NP}.

\begin{lemma}\label{HowManyPrimes}
If $M$ is an integer matrix so that each column of $M$ has Euclidean norm at most $t$ then the torsion part of $\coker(M)$ has size at most $t^n$. 
\end{lemma}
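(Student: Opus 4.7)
The plan is to combine the Smith normal form over $\Z$ with Hadamard's inequality; both are classical, which is why this serves as the ``short'' lemma in the outline. Let $r = \rank_{\Q}(M)$. By the Smith normal form there exist unimodular matrices $U$ and $V$ such that $UMV$ has diagonal entries $d_1 \mid d_2 \mid \cdots \mid d_r$ followed by zeros. Consequently
\[
\coker(M) \cong \Z^{n - r} \oplus \bigoplus_{i = 1}^r \Z/d_i\Z,
\]
so the torsion subgroup has order exactly $d_1 d_2 \cdots d_r$. By the standard minor characterization of the invariant factors, this product equals the greatest common divisor $D_r$ of all $r \times r$ minors of $M$.

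In particular, $D_r$ divides $|\det N|$ for every $r \times r$ submatrix $N$ of $M$, and since $r$ is the rank at least one such $N$ has $\det N \neq 0$. Fixing such an $N$, each of its columns is the restriction of some column of $M$ to $r$ of the $n$ coordinates, so each column of $N$ has Euclidean norm at most $t$. Hadamard's inequality then yields
\[
|\det N| \leq \prod_{i = 1}^r \|N_i\|_2 \leq t^r \leq t^n,
\]
and hence the torsion subgroup of $\coker(M)$ has order at most $t^n$.

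There is no substantive obstacle to overcome: the argument is purely a matter of stringing together two textbook results. The only step requiring any thought is to bound the torsion via a \emph{single} full-rank $r \times r$ minor rather than, for instance, via $\det(M^T M)$, which would give an exponent of $2r$ instead of $r$ and spoil the bound. Having done that, the lemma is an immediate consequence.
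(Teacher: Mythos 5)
Your proof is correct, and it is essentially the standard argument: the paper does not prove this lemma itself but cites Claim 2 of \cite{NP}, whose proof likewise reduces the torsion order to the invariant factors (equivalently a full-rank minor of $M$) and bounds it by Hadamard's inequality applied to columns of norm at most $t$. The only cosmetic caveat is that the step $t^r \leq t^n$ uses $t \geq 1$, which holds automatically once $M$ has a nonzero integer column, as in the intended application with $t = \sqrt{k}$.
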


With Lemmas \ref{newSmallCocycles}, \ref{LargeCocyclesLemmaOdd}, and \ref{HowManyPrimes} we can prove Theorem \ref{maintheorem} for $k$ odd.
\begin{proof}[Proof of Theorem \ref{maintheorem} for odd $k$]
For $p = \omega \left(1/n^{k - 1} \right)$ we bound the probability that $M := M(\mathcal{H})$ has cokernel with torsion when $\mathcal{H} \sim \mathcal{H}_k(n, p)$. If $\coker(M)$ has torsion then it has $q$-torsion for some prime $q \leq \sqrt{k}^n$. Moreover if $M$ has $q$-torsion then so does $\coker(M^T)$, so in that case there exists a vector $v$ so that $M^Tv = qw$ for $w$ an integer vector not in the image of $M^T$ and $v \notin (q\Z)^n$. In this case $v \pmod q$ is a nontrivial vector in $\ker_{\Z/q\Z}(M^T)$. By Lemma \ref{LargeCocyclesLemmaOdd} and a union bound over all $\sqrt{k}^n$ primes under consideration we see that with high probability there is no vector in $\ker_{\Z/q\Z}(M^T)$ with support size larger than $n/2$ for any prime $q \leq \sqrt{k}^n$. Indeed letting $T_q$ denote the event that there is a vector $v \in \ker_{\Z/q\Z}(M^T)$ with support size at least $n/2$ and taking union bound over all primes smaller than $\sqrt{k}^n$ we find the probability that there is $v \in \ker_{\Z/q\Z}(M^T)$ of support size at least $n/2$ for some prime $q$ is at most
\begin{eqnarray*}
\Pr\left(\bigcup_{q \leq \sqrt{k}^n, q \text{ prime }} T_q\right) &\leq& \Pr\left(\bigcup_{q \leq \sqrt{k}^n, q \text{ prime }} T_q \middle| |E(\mathcal{H})| > cn\right) + \Pr(|E(\mathcal{H})| \leq cn)\\
&\leq& \sum_{q \leq \sqrt{k}^n, q \text{ prime }} \Pr(T_q | |E(H)| > cn) + \Pr(|E(H)| \leq cn)
%\Pr(T_{q}) &=& \Pr(T_{q} \cap (|E(H)| > cn)) + \Pr(T_{q} \cap (|E(H)| \leq cn)) \\
%&\leq& \Pr(T_{q} | E(H) > cn) + \Pr(|E(H)| \leq cn)
\end{eqnarray*}
where $c$ is some sufficiently large constant for the assumptions of Lemma \ref{LargeCocyclesLemmaOdd} to hold. By a standard coupling argument between $\mathcal{H}_k(n, cn)$ and $\mathcal{H}_{k}(n, p)$ for $p = \omega(1/n^{k - 1})$, we apply Lemma \ref{LargeCocyclesLemmaOdd} to conclude that each term in the sum over at most $\sqrt{k}^n$ primes is at most $\exp(-Cn)$ for $C > \log(k)$. Thus the large sum is at most $\exp(-n\log(k)/2)$ and the last term is $o(1)$ since the number of hyperedges of $\mathcal{H}$ is distributed as a binomial with mean $\omega(n)$. 

  So we can assume that the vector $v$ with $M^Tv = qw$ has support size (over $\Z/q\Z$) at most $n/2$. We know however that with high probability $M^T$ satisfies the assumption of Lemma \ref{newSmallCocycles} with $\delta = 1/2$. In this case then taking $S$ to be the support of $v$ over $\Z/q\Z$, we see that $\rank_{\Q}((M^T)_S) = \rank_{\Z/q\Z}((M^T)_S)$. Therefore $\coker((M^T)_S)$ is $q$-torsion-free. As $S$ is the support of $v$ over $\Z/q\Z$,  $v = v_1 + qv_2$ where $\supp(v_1) = S$, we have that $qw = M^T(v_1 + qv_2) \in \coker((M^T))$, so $M^T(v_1) = qw_1$ for some vector $w_1$. As $v_1$ is supported on $S$ and $\coker((M^T)_S)$ is torsion-free there is some integer vector $u$ supported on $S$ with $M^Tu = w_1$ and so $M^T(u + v_2) = w$ contradicting the choice of $w$ as a torsion element of $\coker(M^T)$.
\end{proof}
The proof when $k$ is even makes use of a definition we introduce later, so we save the proof for the even case for the end.

\section{Vectors with small support}
Here we prove Lemma \ref{newSmallCocycles}. Toward that goal we start with the following definition.
\begin{definition}
Let $M$ be an integer matrix, a subset $S$ of the columns of $M$ is said to be a \emph{torsion cocycle} provided that there exists a prime $q$ so that $\rank_{\Q}(M_S) > \rank_{\Z/q\Z}(M_S)$. A \emph{minimal torsion cocycle} is an inclusion-minimal set of columns $S$ which is a torsion cocycle.
\end{definition}
The name torsion cocycle comes from the fact that our proof of the main theorem here is an adaptation of the cocycle counting argument from proofs of homology vanishing theorems in \cite{LM, MW, NP}. The following claim about minimal torsion cocycles is easy to prove and sets up a sub-hypergraph inclusion problem to rule out small torsion cocycles and prove Lemma \ref{newSmallCocycles}. 
\begin{claim}\label{RankClaim}
If $S$ is a minimal torsion cocycle of a matrix $M$, then $\rank_{\Q}(M_S) = |S|$. Moreover $M_S$ cannot have a row which has $|S| - 1$ entries equal to zero and the remaining entry equal to $1$ or to $-1$.
\end{claim}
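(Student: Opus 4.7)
The plan is to prove both assertions by contrapositive, showing that if either conclusion fails then some proper subset $S' \subsetneq S$ remains a torsion cocycle, contradicting minimality. In both cases the key is to identify a single column $c_0 \in S$ whose removal decreases $\rank_{\Q}(M_S)$ and $\rank_{\Z/q\Z}(M_S)$ by the same amount (either both by $0$ or both by $1$), so that the strict inequality $\rank_\Q(M_S) > \rank_{\Z/q\Z}(M_S)$ transfers to $S'$.

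For the first statement, fix a prime $q$ witnessing that $S$ is a torsion cocycle. Supposing $\rank_\Q(M_S) < |S|$, the columns of $M_S$ are $\Q$-linearly, hence $\Z$-linearly, dependent, so I may write $\sum_{c \in S} \alpha_c M_c = 0$ for integers $\alpha_c$ with $\gcd_c(\alpha_c) = 1$. The gcd-$1$ normalization is the crucial point: it guarantees that some coefficient $\alpha_{c_0}$ is not divisible by $q$, so $M_{c_0}$ lies in the $\Z/q\Z$-span of the remaining columns and simultaneously in the $\Q$-span of them. Removing $c_0$ therefore leaves both ranks unchanged, so $S' := S \setminus \{c_0\}$ is still a torsion cocycle for $q$, contradicting minimality.

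For the second statement, suppose row $r$ of $M_S$ has its only nonzero entry, $\pm 1$, in column $c_0$. Then in any $\Q$- or $\Z/q\Z$-linear combination $\sum_{c\in S}\alpha_c M_c = 0$, reading off row $r$ forces $\alpha_{c_0} = 0$ (using that $\pm 1$ is a unit in both $\Q$ and $\Z/q\Z$). Consequently $\ker_\Q(M_S)$ and $\ker_{\Z/q\Z}(M_S)$ are each supported on $S' := S \setminus \{c_0\}$, so
\[
\rank_\Q(M_S) = \rank_\Q(M_{S'}) + 1, \qquad \rank_{\Z/q\Z}(M_S) = \rank_{\Z/q\Z}(M_{S'}) + 1.
\]
Subtracting preserves the strict inequality, and again $S'$ is a torsion cocycle, contradicting minimality of $S$.

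There is no real obstacle here; the only subtlety worth emphasizing is the gcd-$1$ normalization in the first part, which prevents the possibility that every coefficient of the rational dependence reduces to $0$ modulo $q$ and thus becomes uninformative over $\Z/q\Z$. With that caveat, both parts are bookkeeping with ranks under single-column deletions.
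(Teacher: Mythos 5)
Your proof is correct. The second part matches the paper's argument essentially verbatim: the row with a single $\pm1$ entry forces the corresponding column to be independent of the rest over every field, so deleting it drops both ranks by exactly one and preserves the defect.

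For the first part you take a slightly different route. The paper passes directly to a maximal $\Q$-linearly independent subset $U \subsetneq S$, uses minimality to conclude $\rank_\Q(M_U) = \rank_{\Z/q\Z}(M_U)$, and then derives a contradiction from the chain $\rank_\Q(M_U) = \rank_\Q(M_S) > \rank_{\Z/q\Z}(M_S) \geq \rank_{\Z/q\Z}(M_U)$. You instead extract an explicit integer linear dependence, normalize so $\gcd_c(\alpha_c) = 1$, and use the fact that some $\alpha_{c_0} \not\equiv 0 \pmod q$ to identify a single column whose removal leaves both $\rank_\Q$ and $\rank_{\Z/q\Z}$ unchanged. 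Both arguments are correct and short; yours is a bit more constructive (it exhibits the column to delete via an arithmetic observation), whereas the paper's is a bit more economical (it avoids the need to reason about integer dependences and reduction mod $q$). The gcd-$1$ normalization you highlight is indeed the correct point of care in your version, since a rational dependence cleared to integers without dividing by the gcd could vanish identically mod $q$.
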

\begin{proof}
Suppose that $\rank_{\Q}(M_S) < |S|$. Let $U \subsetneq S$ be a maximal collection of linearly independent columns of $M_S$ over $\Q$. Since $S$ is a torsion cocycle there exists a prime $q$ so that $\rank_{\Q}(M_S) > \rank_{\Z/q/\Z}(M_S)$. By minimality of $S$, $\rank_{\Q}(M_U) = \rank_{\Z/q\Z}(M_U)$. On the other hand we have
\begin{eqnarray*}
\rank_{\Q}(M_U) &=& \rank_{\Q}(M_S) \\
&>& \rank_{\Z/q\Z}(M_S) \\
&\geq& \rank_{\Z/q\Z}(M_U).
\end{eqnarray*}
Thus, we reach a contradiction and finish the proof of the first part of the claim.

For the second part we observe that if $z$ is a row of $M_S$ so that the $i$th entry of $z$ is $\pm 1$ and all other entries of $z$ are $0$ then clearly over any field column $i$ of $M_S$ is outside the span of all other columns of $M_S$. Thus deleting $i$ from $S$ drops the rank of $M_S$ over \emph{any} field by 1. Thus $\rank_{\Q}(M_{S \setminus \{i\}}) > \rank_{\Z/q\Z}(M_{S \setminus \{i\}})$ when $\rank_{\Q}(M_S) > \rank_{\Z/q\Z}(M_S)$ as both ranks drop by one. However this contradicts minimality.
\end{proof}
By the claim we have that any set of columns $S \subseteq [n]$ of $M^T$ which form a minimal torsion cocycle there must be at least $|S|$ nonzero rows in $M^T_S$ in order for the rank of $M^T_S$ over $\Q$ to be at least $|S|$. As the columns of $M^T$ are indexed by vertices of $\mathcal{H}$ and the rows are indexed by hyperedges of $\mathcal{H}$, for $S$ the set of $k$ vertices corresponding to the minimal torsion cocycle we have by the two parts of Claim \ref{RankClaim}:
%By the construction of $M$ from the incidence matrix of $\mathcal{H}$ we observe that if $N$ is a minimal torsion cocyle for $M^{T}$ then $M^T_N$ has at least $|N|$ nonzero rows. This means that the number of hyperedges that contain a vertex of $N$ is at least $N$. On the other hand by minimality no hyperedge of $\mathcal{H}$ may contain exactly one vertex of $N$. So we have the following two conditions on $M^T$
\begin{enumerate}
\item There are at least $k$ rows of $M^T$ with support intersecting $S$. That is there are at least $k$ hyperedges in the random hypergraph which involve at least one vertex of $S$.
\item There are no rows of $M^T$ which intersect $S$ in exactly one place. In terms of the underlying hypergraph no hyperedge contains exactly one vertex of $S$.
\end{enumerate}
%Condition (1) above is immediate from Claim \ref{RankClaim}. Suppose on the other hand that Condition (2) fails to hold. Let $v$ be an integer vector supported on $S$ so that $M^Tv = qw$ with $w \neq 0$ and $q$ prime not dividing all entries of $v$. We observe that if the $r$th row of $M^T$ corresponds to a hyperedge intersected $S$ in only one place then the $r$th entry of $M^Tv$ is plus or minus the $r$th entry of $v$ since each entry in $M^T$ is $0$, $1$, or $-1$. In this case then $q$ divides the $r$th entry of $v$. Taking $x$ as the $r$th entry of $v$ we have that $M^T(v - xe_r) = qw$. In the former case as $v - xe_r$ is supported on a subset of $S$ and the columns in $M^T_S$ are linearly independent over $\Q$ by Claim \ref{RankClaim} we conclude that $v = xe_r$, but then $|\supp(v)| = 1$ and $q$ divides the single nonzero entry. In the latter case
%Letting $x$ denote the $w$th entry of $v$ we have that $Mv = 0 \pmod q$ implies that $q$ divides $x$. In this case then $M(v - xe_w) = 0 \pmod q$ and $v - xe_w$ has smaller support than $v$, so by minimality of $S$ either there is a prime $q'$ that divides all entries of $v - xe_w$ or $M(v - xe_w) = 0$. In the latter case then $Mv = xe_w$

We use a linearity of expectation argument to bound the probability that $\mathcal{H} \sim \mathcal{H}_k(n, p)$ for $p = \omega(1/n^{k - 1})$ has a small subset of vertices $S$ which satisfies conditions (1) and (2) above.

\begin{lemma}\label{SmallCocyclesLemma}
Fix $\delta \in (0, 1)$. With high probability $\mathcal{H} \sim \mathcal{H}_k(n, p)$ for $p = \omega(1/n^{k - 1})$ does not contain a subset of vertices $W$ of size at most $\delta n$ so that both of the following hold:
\begin{itemize}
\item No hyperedge intersects $W$ exactly once, and 
\item there are at least $|W|$ hyperedges that intersect $W$. 
\end{itemize}
\end{lemma}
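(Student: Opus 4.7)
The plan is to use the first moment method: for each $s \in [1, \delta n]$, I will sum over all $\binom{n}{s}$ candidate sets $W$ of size $s$ the probability that $W$ satisfies both conditions of the lemma, and show that the total expected number of such bad sets is $o(1)$. The key observation that makes this tractable is that the two conditions concern \emph{disjoint} families of potential hyperedges --- those meeting $W$ in exactly one vertex versus those meeting $W$ in at least two --- so the two events are independent and each factor admits a clean estimate.

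A preliminary step is to dispense with the $s=1$ case: every hyperedge meeting a singleton $W=\{v\}$ meets it in exactly one vertex, so having at least one meeting hyperedge contradicts having none meeting in exactly one vertex. For $s \geq 2$, set $N_1 := s\binom{n-s}{k-1}$ and $N_{\geq 2} := \sum_{i=2}^{k}\binom{s}{i}\binom{n-s}{k-i}$ for the numbers of potential $k$-edges meeting $W$ in exactly one or in at least two vertices, respectively. By independence, the probability that a fixed $W$ satisfies both conditions is at most $(1-p)^{N_1}\binom{N_{\geq 2}}{s} p^s$.

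Now write $\lambda := n^{k-1}p = \omega(1)$. The crucial quantitative estimates are $pN_1 \geq c_k s\lambda$ for some constant $c_k = c_k(k,\delta) > 0$, coming from $\binom{n-s}{k-1} \geq (1-\delta)^{k-1}\binom{n}{k-1}(1-o(1))$ when $s \leq \delta n$; and $N_{\geq 2} = O_k(s^2 n^{k-2})$, since the $i=2$ term dominates the sum up to a $k$-dependent constant as long as $s \leq n$. Inserting $\binom{n}{s} \leq (en/s)^s$ and $\binom{N_{\geq 2}}{s} \leq (eN_{\geq 2}/s)^s$ and simplifying, the expected number of bad sets of size $s$ should be at most
\[
(A_k \lambda)^s\, e^{-c_k s\lambda} \;=\; \bigl(A_k \lambda\, e^{-c_k \lambda}\bigr)^{s}
\]
for some constant $A_k$. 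Since $\lambda \to \infty$, the base of this exponential tends to $0$, so summing the resulting geometric series over $s \geq 2$ yields $o(1)$.

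The only point that requires care is uniformity of these estimates across the entire range $s \in [2,\delta n]$. Because $\delta$ is a fixed constant strictly less than $1$, both the lower bound on $N_1$ and the upper bound on $N_{\geq 2}$ hold with constants depending only on $k$ and $\delta$, so no substantive obstacle is anticipated beyond this bookkeeping; the independence observation above is what removes any need to track correlations between hyperedges of different intersection types with $W$.
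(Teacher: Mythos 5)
Your proposal is correct and follows essentially the same route as the paper: a first moment bound over sets $W$ of each size, pairing the factor $\binom{N_{\geq 2}}{s}p^s$ for the at least $|W|$ edges meeting $W$ in two or more vertices with the exclusion factor $(1-p)^{N_1}\leq \exp(-pN_1)$ for edges meeting $W$ exactly once, and then summing a geometric series whose base is of the form $A_k\lambda e^{-c_k\lambda}\to 0$. The bookkeeping you flag (uniformity of $N_1\geq c_{k,\delta}sn^{k-1}$ and $N_{\geq 2}=O_k(s^2n^{k-2})$ over $s\leq\delta n$) is exactly what the paper's computation carries out.
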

\begin{proof}
We prove this by a linearity of expectation argument counting the expected number of subsets $W$ for which there is no hyperedge that intersects $W$ exactly at a single vertex and there are at least $|W|$ hyperedges that intersect $W$ (necessarily each in at least two vertices). For each $1 \leq t \leq \delta n$ there are $\binom{n}{t}$ ways to pick $W$. From here we have to choose at least $t$ hyperedges that intersect $W$ in at least 2 vertices to be included. The number of hyperedges that intersect $W$ in at least 2 vertices is 
\[\sum_{i = 0}^{k -2} \binom{n - t}{i} \binom{t}{k - i} \leq \sum_{i = 0}^{k - 2} n^i t^{k - i} .\]
And from this set we choose at least $t$ hyperedges to be included. Next we have to guarantee that all hyperedges that intersect $W$ in only one place are excluded. The number of hyperedges that meet $W$ in exactly one vertex is 
\[t \binom{n - t}{k - 1}.\]
Putting this all together the number of expected number sets $W$ that satisfy the conditions we want to avoid is at most 
\begin{eqnarray*}
\sum_{t = 1}^{\delta n} \binom{n}{t} \left( \sum_{i = 0}^{k -2} n^i t^{k - i} \right)^t p^t \exp\left(-p \binom{n - t}{k - 1}\right)^t.
\end{eqnarray*}
Taking $p = f/n^{k - 1}$ with $f := f(n) \rightarrow \infty$ arbitrarily slowly, we have the above is at most 
\begin{eqnarray*}
\sum_{t = 1}^{\delta n} \frac{n^t e^t}{t^t}  \left( \sum_{i = 0}^{k -2} n^i t^{k - i} \right)^t (f/n^{k - 1})^t \exp\left(-(f/n^{k - 1}) \frac{(1 - \delta)^{k - 1}n^{k - 1}}{(k - 1)^{k - 1}}\right)^t.
\end{eqnarray*}
Thus for some constant positive $c_k$ that depends on $k$ and $\delta$ we have that the above sum is at most
\begin{eqnarray*}
\sum_{t = 1}^{\delta n} \left(e n^{2 - k} \sum_{i = 0}^{k - 2} n^i t^{k - i - 1} \right)^t \left(f/n^{k - 1} \right)^t  \exp\left(-f c_k\right)^t &\leq& \sum_{t = 1}^{\delta n} \left(e n^{2 - k} f\sum_{i = 0}^{k - 2} \delta^{k - i - 1}  \right)^t  \exp\left(-f c_k\right)^t \\
&\leq& \sum_{t = 1}^{\delta n} \left( e (k - 1) f e^{-c_k f} \right)^t
\end{eqnarray*}
And this is $o(1)$ as long as $f$ tends to infinity. Thus by Markov's inequality we have the lemma.
\end{proof}

Lemma \ref{newSmallCocycles} follows immediately from Claim \ref{RankClaim} and Lemma \ref{SmallCocyclesLemma}.

\section{Vectors with large support}

We turn our attention now to the proof of Lemma \ref{LargeCocyclesLemmaOdd}. As this proof is about $\mathcal{H}_k(n, m)$ we consider $\mathcal{H}_k(n, m)$ and the corresponding $M(\mathcal{H})$ as a stochastic process obtained by adding the hyperedges of $\binom{[n]}{k}$ one at a time in random order. In order to prove Theorem \ref{LargeCocyclesLemmaOdd} we want to lower bound the probability that the dimension of the kernel of $M^T$ drops as we add the random rows of $M^T$ one at a time. Toward that end we introduce the following definition.
\begin{definition}
%DEFINE V_{k, n}
Take $V_{k, n}$ to be the set of vectors that can be column vectors of $M(\mathcal{H})$, i.e. $V_{k, n}$ is the set of vectors in $\Z^n$ with exactly $k$ nonzero entries and the nonzero entries alternating as $1,-1, 1, -1, ...$. For $k \geq 3$, $q$ prime, and $v \in (\Z/q\Z)^n$ let $B_q(v)$ denote the set $\{w \in V_{k, n} \mid w \cdot v \neq 0 \text{ over $\Z/q\Z$.}\}$. We observe that for any matrix $M$ with rows in $V_{k, n}$ if $v \in \ker_{\Z/q\Z}(M)$, and we add a row in $V_{k, n}$ to it to create a new matrix $M'$ then $v \in \ker_{\Z/q\Z}(M')$ if and only if the new row is outside of $B_q(v)$. 
\end{definition}
We prove the following statement regarding $B_q(v)$ when $k$ is odd.
\begin{lemma}\label{CoisoperimetryOddK}
For $k$ odd there exists $\gamma := \gamma(k) > 0$ so that for every prime $q$ and every vector $v \in (\Z/q\Z)^n$, $|B_q(v)| \geq \gamma |\supp(v)|^k$. 
\end{lemma}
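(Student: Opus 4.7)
The plan is to reduce $|B_q(v)|$ to a count of $k$-subsets $S = \{i_1 < \cdots < i_k\} \subseteq [n]$ with $f(S) := \sum_{j=1}^{k} (-1)^{j+1} v_{i_j} \not\equiv 0 \pmod q$. Write $T = \supp(v)$ and $s = |T|$. I would split based on whether $s \leq n/2$ or $s > n/2$; small values of $n$ (relative to $k$) give $s^k = O_k(1)$ and can be absorbed into the choice of $\gamma$.

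When $s \leq n/2$, I would take $S = \{a\} \cup R$ for $a \in T$ and $R \in \binom{[n] \setminus T}{k-1}$. Every element of $R$ contributes $0$ to $f(S)$, so $f(S) = \pm v_a \not\equiv 0 \pmod q$, and different $(a, R)$ yield different $S$. Using $n-s \geq s$ this gives $|B_q(v)| \geq s \binom{n-s}{k-1} \geq s \binom{s}{k-1} = \Omega_k(s^k)$. When $s > n/2$ this construction runs out of room, and I would work inside $T$ using $N_c := |\{a \in T : v_a = c\}|$ and $N^* := \max_{c \neq 0} N_c$. If $N^* \geq s/(2k)$, let $c^*$ realize the max and $T^* := v^{-1}(c^*) \cap T$; because $k$ is odd, for every $k$-subset $S \subseteq T^*$ the alternating sum telescopes to $f(S) = c^* - c^* + \cdots + c^* = c^*$, which is nonzero mod $q$, so $|B_q(v)| \geq \binom{|T^*|}{k} = \Omega_k(s^k)$. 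If instead $N^* < s/(2k)$, I would fix $a_1 < \cdots < a_{k-1}$ in $T$ and vary $x \in T$: inserting $x$ into the $p$-th gap yields $f = (-1)^{p+1} v_x + A_p$ for a constant $A_p$ depending only on the $a_j$'s and $p$, so $x$ is bad only when $v_x = (-1)^p A_p$. Summing over the $k$ positions, the number of bad $x \in T$ is at most $\sum_{p=1}^{k} N_{(-1)^p A_p} \leq k N^* < s/2$, leaving at least $s/4$ good $x$ per $(k-1)$-tuple for $s$ large; correcting the $k$-fold overcount gives $|B_q(v)| \geq \binom{s}{k-1} \cdot s/(4k) = \Omega_k(s^k)$.

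I expect the main obstacle to be the second subcase $N^* < s/(2k)$: here $v$ is spread across many value classes so the telescoping argument fails, and one must extract positivity from the positional alternation of signs through the constants $A_p$. The threshold $s/(2k)$ is chosen precisely so that $k$ times a class size still falls under $s/2$, balancing the two subcases. The oddness of $k$ enters decisively in the first subcase through the telescoping $c^* - c^* + \cdots + c^* = c^*$, which is why the even case (which the paper handles separately) requires a different tactic and connects to the appearance of the all-ones vector in $\ker M^T$ mentioned in the outline.
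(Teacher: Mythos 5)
Correct, and essentially the same approach as the paper. Your sub-cases for $s > n/2$ are exactly the paper's dichotomy: the case $N^* \geq s/(2k)$ is the paper's ``not $\epsilon$-balanced'' case (telescoping $c^*-c^*+\cdots+c^*=c^*\neq 0$, which is where $k$ odd enters), and the case $N^*<s/(2k)$ is the paper's ``$\epsilon$-balanced'' case (fix $k-1$ coordinates in the support, the remaining one lands in a constrained value-class of size at most $N^*$); your extra $s\leq n/2$ case, which counts subsets with exactly one coordinate inside the support, is unnecessary since the in-support argument already works for all $s$, and your threshold $1/(2k)$ simply plays the role of the paper's constant $\epsilon<(k-1)!/k^k$ under a slightly different counting.
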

In order to prove this lemma we introduce the following definition.
\begin{definition}
For $q$ a prime and $\epsilon > 0$ we say that $v \in (\Z/q\Z)^n$ is \emph{$\epsilon$-balanced} provided every nonzero element of $v$ appears at most $\epsilon |\supp(v)|$ times in $v$.
\end{definition}
With the definition of $\epsilon$-balanced the proof of Lemma \ref{CoisoperimetryOddK} splits into two cases.
\begin{proof}[Proof of Lemma \ref{CoisoperimetryOddK}]
For a vector $v \in (\Z/q\Z)^n$ we wish to bound from below the number of $w \in V_{k, n}$ so that $w \cdot v \neq 0$. With foresight into the calculations fix some positive $\epsilon < \frac{(k - 1)!}{k^{k}}$. We consider first the case that $v$ is $\epsilon$-balanced. 

If $v$ is $\epsilon$-balanced then there are at least 
\[\binom{|\supp(v)|}{k} - \binom{|\supp(v)|}{k - 1} \epsilon |\supp(v)|\]
vectors $w$ in $V_{k, n}$ over $\Z/q\Z$ so that $w \cdot v \neq 0$. To see this we restrict only to those vectors $w \in V_{k, n}$ so that $\supp(w) \subseteq \supp(v)$; denote this set by $V_{k, n}^v$. There are clearly (exactly) $\binom{|\supp(v)|}{k}$ vectors in $V_{k, n}^v$. We claim that among these vectors at most $\binom{|\supp(v)|}{k - 1} \epsilon |\supp(v)|$ will be orthogonal over $\Z/q\Z$ to $v$. To construct a vector $w$ in $V_{k, n}^v$ that's orthogonal to $v$ we first pick the first $k - 1$ positions in $\supp(v)$ to be nonzero in $w$. Clearly there are at most $\binom{|\supp(v)|}{k - 1}$ ways to do this. If $w'$ denotes this vector (in $V_{k -1, n}^v$) then if $w' \cdot v = 0$ then there is no way to select the last position of the nonzero entry in $w$ so that $\supp(w) \subseteq \supp(v)$ and $w \cdot v = 0$. On the other hand if $w' \cdot v \neq 0$ then in order for $w \cdot v$ to be zero, the last entry of $w$ must fall into a position where $v$ is the unique nonzero element of $\Z/q\Z$ equal to $-w' \cdot v$. By the $\epsilon$-balanced condition there are at most $\epsilon |\supp(v)|$ choices for this position. Thus $V_{k, n}^v$ has $\binom{|\supp(v)|}{k}$ vectors and at most $\binom{|\supp(v)|}{k - 1} \epsilon |\supp(v)|$ of them are orthogonal to $v$. 

Suppose on the other hand that $v$ is not $\epsilon$-balanced. Then there is some subset $S$ of $\supp(v)$ with $|S| \geq \epsilon|\supp(v)|$ so that $v$ restricted to $S$ is a constant, nonzero function in $\Z/q\Z$. Then any vector in $V_{k, n}$ with support contained in $S$ cannot be orthogonal to $v$ as the dot product of $v$ with such a vector is simply $x - x + x - x \cdot + x  = x$. Thus there are at least 
\[\binom{\epsilon|\supp(v)|}{k}\]
vectors in $V_{k, n}$ that are not orthogonal to $v$ when $v$ is not $\epsilon$ balanced. Thus we have the claim with 
\[\gamma = \min\left\{\frac{1}{k^k} - \frac{\epsilon}{(k - 1)!}, \frac{\epsilon^k}{k^k} \right\}.\]
\end{proof}
Now we prove Lemma \ref{LargeCocyclesLemmaOdd}.
\begin{proof}[Proof of Lemma \ref{LargeCocyclesLemmaOdd}]
Consider the stochastic process to build $M^T$ one row at a time. Let $M^T_i$ denote the $i \times n$ matrix at step $i$ and let $Z_i$ be the random variable $\dim(\ker_{\Z/q\Z}(M^T_i))$ with the convention that $Z_0 = n$. If $M^T_i$ has a vector of support size at least $\delta n$ in its kernel over $\Z/q\Z$ then the probability that $K_{i + 1} < K_i$ is at least \[\frac{\gamma (\delta n)^{k}}{\binom{n}{k}} \geq k! \delta^{k} \gamma,\]
where $\gamma$ is as in Lemma \ref{CoisoperimetryOddK}. Thus the probability that $M^T_{cn}$ for $c$ a large constant has a kernel element over $\Z/q\Z$ of support size at least $\delta n$ is at most the probability that a binomial random variable with $cn$ trials and success probability $k! \delta^k \gamma$ has at most $n$ successes. By Chernoff's bound this is at most $\exp(-Cn)$ where $C$ can be made an arbitrarily large constant by setting $c$ large enough. 
\end{proof}

Now we turn our attention to the even case. Fortunately the relevant definition we'll need is the $\epsilon$-balanced definition we've already introduced. The analogue of Lemma \ref{LargeCocyclesLemmaOdd} is the following:
\begin{lemma}\label{LargeCocyclesLemmaEven}
For any $k \geq 4$ even, $\delta > 0$, and $0 < \epsilon < \frac{(k -1)!}{k^k}$ and $c = c(k, \delta, \epsilon)$ sufficiently large there exists $C = C(c) > \log(k)$ so that for any prime $q$ the probability that $\ker_{\Z/q\Z}(M^T)$ for $M = M(\mathcal{H})$, $\mathcal{H} \sim \mathcal{H}_k(n, cn)$ contains an $\epsilon$-balanced vector of support size at least $\delta n$ is at most $e^{-Cn}$.
\end{lemma}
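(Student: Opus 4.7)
The plan is to follow the proof of Lemma \ref{LargeCocyclesLemmaOdd} almost verbatim, after first extracting an $\epsilon$-balanced version of Lemma \ref{CoisoperimetryOddK} that is insensitive to the parity of $k$. First I would record the following coisoperimetric bound: for every prime $q$, every $k \geq 3$ (even or odd), and every $\epsilon$-balanced $v \in (\Z/q\Z)^n$ with $0 < \epsilon < (k-1)!/k^k$,
\[
|B_q(v)| \;\geq\; \gamma\, |\supp(v)|^k, \qquad \gamma := \tfrac{1}{k^k} - \tfrac{\epsilon}{(k-1)!} > 0.
\]
The proof is word-for-word the first case of the proof of Lemma \ref{CoisoperimetryOddK}: restrict attention to vectors $w \in V_{k,n}$ supported in $\supp(v)$, partition them by their smallest $k-1$ support indices $w'$, and observe that $\epsilon$-balancedness allows at most $\epsilon|\supp(v)|$ choices for the remaining support index to make $w \cdot v$ vanish. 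Parity plays no role in that half of the proof. By contrast, the non-$\epsilon$-balanced case of the odd proof genuinely fails for even $k$: if $v$ is constant with value $x$ on a large set $S$, then any $w \in V_{k,n}$ supported in $S$ has $w\cdot v = x - x + \cdots + x - x = 0$. This failure is exactly the obstruction that forces the $\epsilon$-balanced hypothesis into the statement of Lemma \ref{LargeCocyclesLemmaEven}.

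Next I would run the same row-by-row stochastic process as in the proof of Lemma \ref{LargeCocyclesLemmaOdd}. Write $K_i := \ker_{\Z/q\Z}(M_i^T)$ for the kernel after $i$ rows have been inserted, and $Z_i := \dim K_i$. The key monotonicity observation is that if some $\epsilon$-balanced $v$ with $|\supp(v)| \geq \delta n$ lies in $K_{cn}$, then the \emph{same} $v$ lies in every earlier $K_i$ (because $K_i \supseteq K_{cn}$), and $\epsilon$-balancedness of $v$ does not depend on $i$. Consequently, on this event, for every $i < cn$, conditional on the history through step $i$, the probability that the next random row $w$ satisfies $w\cdot v \neq 0$ (which forces $Z_{i+1} < Z_i$) is at least
\[
\frac{|B_q(v)|}{\binom{n}{k} - i} \;\geq\; \frac{\gamma(\delta n)^k}{\binom{n}{k}} \;\geq\; k!\,\gamma\,\delta^k \;=: p,
\]
where sampling without replacement only improves this estimate.

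To finish, the number of strict drops $Z_{i+1} < Z_i$ over $i = 0, \dots, cn-1$ can never exceed $Z_0 = n$. On the event that $K_{cn}$ contains an $\epsilon$-balanced vector of support at least $\delta n$, a standard coupling with independent Bernoulli$(p)$ trials dominates the drop count from below by a $\text{Bin}(cn, p)$ random variable, and Chernoff's inequality then bounds the probability of at most $n$ drops in $cn$ trials by $\exp(-Cn)$, with $C = C(c, k, \delta, \epsilon)$ that can be driven above $\log k$ by taking $c$ sufficiently large. The main conceptual subtlety — and the reason the argument does not work without the $\epsilon$-balanced hypothesis — is that for even $k$ the kernel always contains the all-ones vector (and other near-constant vectors), so one cannot hope to collapse the kernel entirely; the coisoperimetric bound for $\epsilon$-balanced witnesses is exactly what lets the Chernoff argument produce enough drops without needing a global rank collapse. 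Formalizing the stochastic-domination coupling (e.g.\ inserting auxiliary independent Bernoulli$(p)$ trials at any step where the witness is absent) is routine and I do not anticipate a real obstacle.
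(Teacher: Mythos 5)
Your proposal is correct and follows essentially the same route as the paper: the paper's Lemma \ref{CoisoperimetryEvenK} is exactly your extracted $\epsilon$-balanced coisoperimetric bound (the balanced case of the proof of Lemma \ref{CoisoperimetryOddK}, which is parity-insensitive), and the paper then repeats the stochastic-process/Chernoff argument of Lemma \ref{LargeCocyclesLemmaOdd} verbatim, just as you do.
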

The proof of this proceeds from the following lemma exactly as the proof of Lemma \ref{LargeCocyclesLemmaOdd}.
\begin{lemma}\label{CoisoperimetryEvenK}
For $k \geq 4$ even and $0 < \epsilon < \frac{(k - 1)!}{k^k}$ there exists $\gamma = \gamma(k, \epsilon)$ so that for any prime $q$ and every $\epsilon$-balanced vector $v \in (\Z/q\Z)^n$, $|B_q(v)| \geq \gamma |\supp(v)|^k$.
\end{lemma}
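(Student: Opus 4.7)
The plan is to mirror, essentially line-for-line, the ``$\epsilon$-balanced'' half of the proof of Lemma \ref{CoisoperimetryOddK}. In fact the hypothesis of Lemma \ref{LargeCocyclesLemmaEven}---that we only control $\epsilon$-balanced vectors when $k$ is even---is exactly designed to let this half of the odd proof be reused, while ducking the other half, which genuinely fails when $k$ is even. Explicitly, the ``constant on a large subset'' shortcut from the odd proof breaks: if $v$ equals some $x \in \Z/q\Z$ on $S \subseteq \supp(v)$ with $|S| \geq k$ and $\supp(w) \subseteq S$, then for even $k$ the alternating dot product is $x - x + x - x + \cdots + x - x = 0$, so such $w$ are orthogonal to $v$ rather than non-orthogonal. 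Hence there is no ``free'' reservoir of elements of $B_q(v)$ coming from a highly repeated value, and this is precisely what forces the $\epsilon$-balanced hypothesis into the statement.

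For the balanced count, fix an $\epsilon$-balanced $v \in (\Z/q\Z)^n$ and set $s := |\supp(v)|$. Following the odd proof I would restrict to the subset $V_{k, n}^v \subseteq V_{k, n}$ of $w$ with $\supp(w) \subseteq \supp(v)$, which contains exactly $\binom{s}{k}$ vectors, and then upper bound the number of those that are orthogonal to $v$ modulo $q$. Choose the first $k - 1$ nonzero positions $i_1 < \cdots < i_{k-1}$ of such a $w$ inside $\supp(v)$ in at most $\binom{s}{k-1}$ ways, and let $w'$ denote the resulting partial vector. The only change from the odd case is the sign at the last coordinate: since $k$ is even, the last nonzero entry of $w$ carries sign $(-1)^{k+1} = -1$, so $w \cdot v = w' \cdot v - v_{i_k}$. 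If $w' \cdot v = 0$, then orthogonality would force $v_{i_k} = 0$, which is impossible because $i_k \in \supp(v)$, so no orthogonal extension exists. If $w' \cdot v \neq 0$, then orthogonality forces $v_{i_k}$ to equal the specific nonzero value $w' \cdot v$, and by the $\epsilon$-balanced hypothesis at most $\epsilon s$ positions of $\supp(v)$ carry that value. Thus at most $\binom{s}{k-1}\epsilon s$ elements of $V_{k, n}^v$ are orthogonal to $v$.

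Using the elementary estimates $\binom{s}{k} \geq (s/k)^k$ (valid for $s \geq k$) and $\binom{s}{k-1} \leq s^{k-1}/(k-1)!$, this yields
\[
|B_q(v)| \;\geq\; \binom{s}{k} - \binom{s}{k-1}\,\epsilon s \;\geq\; \left(\frac{1}{k^k} - \frac{\epsilon}{(k-1)!}\right) s^k,
\]
and the hypothesis $\epsilon < (k-1)!/k^k$ ensures the parenthesized constant is strictly positive. Setting $\gamma(k, \epsilon) := \frac{1}{k^k} - \frac{\epsilon}{(k-1)!}$ gives the conclusion for $s \geq k$; the small-$s$ range is not actually needed in the application, since Lemma \ref{LargeCocyclesLemmaEven} invokes this bound only for $s \geq \delta n$.

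I do not anticipate a real obstacle. The balanced counting argument from Lemma \ref{CoisoperimetryOddK} is essentially sign-blind at the last coordinate, and the $w' \cdot v = 0$ branch is in fact easier for even $k$ (it contributes zero orthogonal extensions rather than one). The only genuinely new input is the conceptual observation above---that the monochromatic shortcut collapses when $k$ is even---which both motivates the $\epsilon$-balanced restriction in the hypothesis and immediately suggests that the rest of the odd proof transfers without incident.
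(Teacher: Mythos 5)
Your proposal is correct and matches the paper's proof, which is stated there in one line: the even case is exactly the $\epsilon$-balanced half of the argument for Lemma \ref{CoisoperimetryOddK}, with the same count $\binom{s}{k} - \binom{s}{k-1}\epsilon s$ and the same constraint $\epsilon < (k-1)!/k^k$. Your explicit tracking of the sign at the last coordinate and your remark on why the non-balanced (monochromatic) shortcut fails for even $k$ are accurate elaborations of what the paper leaves implicit.
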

The proof of Lemma \ref{CoisoperimetryEvenK} is exactly the $\epsilon$-balanced case of the proof of Lemma \ref{CoisoperimetryOddK}. Now we are ready to prove Theorem \ref{maintheorem} in the case that $k$ is even.
\begin{proof}[Proof of Theorem \ref{maintheorem} for even $k$]
As in the odd $k$ case it suffices to show that with high probability there are no $v$, $w$, and $q$ so that $M^Tv = qw$, $w \notin \Im(M^T)$ and $v \notin (q\Z)^n$. Fix a positive $\epsilon < (k - 1)!/k^k$ and take $\delta = (1 + \epsilon)^{-1}$. By Lemma \ref{SmallCocyclesLemma} the probability that $M^T$ has a torsion cocycle of size at most $\delta n$ is $o(1)$. If $M^Tv = qw$ with $S := \supp(v)$ of size at most $\delta n$ then we use the fact that with high probability $\coker((M^T)_S)$ is torsion-free to conclude that $w \in \Im(M^T)$.  

For vectors of large support, by a similar union bound to the odd $k$ case we can use Lemma \ref{LargeCocyclesLemmaEven} and Lemma \ref{HowManyPrimes} to rule out $\epsilon$-balanced vector $v \in \Z/q\Z$ with $|\supp(v)| \geq \delta n$ and $v \in \ker_{\Z/q\Z}(M^T)$. The only case that's different is if $v$ is not $\epsilon$-balanced but still has large support. If $v$ is not $\epsilon$-balanced then some $x \in \Z/q\Z \setminus \{0\}$ appears more than $\epsilon \delta n$ times in $v \pmod q$. In this case then $x\textbf{1} - v$ satisfies $|\supp(x\textbf{1} - v)| \leq n - \epsilon \delta n = \delta n$, and $M^T(x \textbf{1} - v) = -qw$, but then $S = \supp(x \textbf{1} - v)$ has size at most $\delta n$ so we appeal again to the fact that $\coker((M^T)_S)$ has no $q$-torsion to conclude that $w \in \Im(M^T)$. 

%Similarly if $S$ is a minimal torsion cocycle of support size larger at $\delta n$ then there is a prime $q \leq \sqrt{k}^n$ and an integer vector $v$ supported on $S$ so that $M^Tv = qw$ for some nonzero vector $w$ with $q$ not dividing any nonzero entries of $v$. 
\end{proof}

\section{Proof of Theorem \ref{GroupVanishes}}
For the proof of Theorem \ref{GroupVanishes}, we use Theorem \ref{maintheorem} as well as a theorem of Cooper, Frieze, and Pegden \cite{CooperFriezePegden}. In \cite{CooperFriezePegden}, the authors consider the same model that we consider except that the matrices are taken over $\Z/2\Z$ rather than over $\Z$. Theorem 1.3 from \cite{CooperFriezePegden} and the usual coupling argument between $\mathcal{H}_k(n, p)$ and $\mathcal{H}_k(n, m)$ immediately give the following.
\begin{theorem}[Corollary to Theorem 1.3 of \cite{CooperFriezePegden}]\label{CFPResult}
Let $n^* := n$, if $k$ is odd, and $n^* := n - 1$, if $k$ is even. For $p = \frac{c \log n}{n^{k - 1}}$ the following hold for $\mathcal{H}_k(n, p)$.
\begin{itemize}
\item If $c < k!$ then with high probability $\rank_{\Z/2\Z}(M(\mathcal{H}))$ is smaller than $n^*$.
\item If $c > k!$ then with high probability $\rank_{\Z/2\Z}(M(\mathcal{H}))$ is equal to $n^*$.
\end{itemize}
\end{theorem}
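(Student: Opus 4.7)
The plan is to translate Theorem~1.3 of \cite{CooperFriezePegden}, which is stated for $\mathcal{H}_k(n,m)$, into the corresponding statement for $\mathcal{H}_k(n,p)$ via the standard coupling between the two models. Two observations underlie the reduction. First, reducing modulo $2$ kills all signs, so over $\Z/2\Z$ the entries $(-1)^{i+1}$ of $M(\mathcal{H})$ all become $1$ and $M(\mathcal{H})$ is exactly the $n \times m$ unsigned vertex--edge incidence matrix of $\mathcal{H}$, which is precisely the random matrix model studied in \cite{CooperFriezePegden}. Second, when $k$ is even each column of this incidence matrix has $k \equiv 0 \pmod 2$ ones, so the all-ones row vector lies in the left kernel and $\rank_{\Z/2\Z}(M) \leq n-1$; when $k$ is odd this obstruction is absent and the trivial upper bound is $n$. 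This explains the definition of $n^{*}$.

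Next I would set up the coupling. For $p = \tfrac{c \log n}{n^{k-1}}$ the number of hyperedges $|E(\mathcal{H})|$ of $\mathcal{H} \sim \mathcal{H}_k(n,p)$ is binomial with mean $\mu := p\binom{n}{k} = (1+o(1))\tfrac{c\,n\log n}{k!}$, and by a Chernoff estimate it lies within a factor $(1 \pm o(1))$ of $\mu$ with probability $1-o(1)$. For a small constant $\epsilon > 0$ set $m_{\pm} := \lceil (1 \pm \epsilon)\mu \rceil$; then one can couple so that $\mathcal{H}_k(n,m_{-}) \subseteq \mathcal{H}_k(n,p) \subseteq \mathcal{H}_k(n,m_{+})$ with probability $1-o(1)$. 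The essential point is the monotonicity of rank: adding a hyperedge only adds a column to $M(\mathcal{H})$, so $\rank_{\Z/2\Z}(M(\mathcal{H}))$ is nondecreasing under inclusion of hypergraphs.

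Finally I would finish case by case. For $c < k!$, choose $\epsilon > 0$ small enough that $c(1+\epsilon) < k!$ still holds; then Theorem~1.3 of \cite{CooperFriezePegden} applied to $\mathcal{H}_k(n,m_{+})$ gives $\rank_{\Z/2\Z}(M) < n^{*}$ w.h.p., and the coupling inclusion $\mathcal{H}_k(n,p) \subseteq \mathcal{H}_k(n,m_{+})$ transfers this upper bound down to $\mathcal{H}_k(n,p)$. For $c > k!$, choose $\epsilon > 0$ so small that $c(1-\epsilon) > k!$; CFP applied to $\mathcal{H}_k(n,m_{-})$ then gives $\rank_{\Z/2\Z}(M) = n^{*}$ w.h.p., the coupling gives the same lower bound for $\mathcal{H}_k(n,p)$, and the a priori upper bound $\rank_{\Z/2\Z}(M) \leq n^{*}$ from the first paragraph forces equality. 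There is no serious obstacle here; the only bookkeeping is ensuring that the strict inequalities $c(1+\epsilon) < k!$ and $c(1-\epsilon) > k!$ survive the choice of $\epsilon$, which is why the statement is formulated with strict inequalities $c < k!$ and $c > k!$ rather than including the critical point $c = k!$.
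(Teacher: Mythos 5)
Your proof is correct and follows the same route as the paper, which likewise deduces the statement from Theorem 1.3 of \cite{CooperFriezePegden} via the standard coupling between $\mathcal{H}_k(n,p)$ and $\mathcal{H}_k(n,m)$ together with monotonicity of $\rank_{\Z/2\Z}$ under adding columns; your parity observation explaining $n^*$ (the all-ones vector in the left kernel when $k$ is even) is exactly the right justification for the a priori bound. The paper leaves these coupling details implicit, so your write-up is simply a fleshed-out version of the intended argument.
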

We can now give a short proof of Theorem \ref{GroupVanishes}.
\begin{proof}[Proof of Theorem \ref{GroupVanishes}]
For any matrix $m \times n$ matrix $M$ the cokernel of $M$ can be read off of the elementary divisors of $M$, so in particular the free part of $\coker(M)$ is $\Z^{n - \rank_{\mathbb{Q}}(M)}$. Moreover the inequality $\rank_{\mathbb{Q}}(M) \geq \rank_{\Z/2\Z}(M)$ for $M$ an integer matrix always holds since $Mv =0$ over $\Q$ implies that $Mv = 0$ over $\Z/2\Z$. For $c < k!$ by Theorem \ref{CFPResult}, with high probability the free part of $\coker(M(\mathcal{H}))$ has rank larger than $n - n^*$. Thus we have the first part of Theorem \ref{GroupVanishes}.

For $c > k!$, we know that $\rank_{\Q}(M(\mathcal{H})) \geq \rank_{\Z/2\Z}(M(\mathcal{H})) = n^*$ asymptotically almost surely, and trivially $\rank_{\Q}(M(\mathcal{H})) \leq n^*$. Thus $\rank_{\Q}(M(\mathcal{H})) = n^*$ in this regime. Thus for $k$ odd, with high probability $\coker(M(\mathcal{H}))$ is finite, and for $k$ even, with high probability $\coker(M(\mathcal{H})) = \Z \oplus A$ for some finite abelian group $A$. However we know by Theorem \ref{maintheorem} that $\coker(M(\mathcal{H}))$ is torsion-free, so we conclude in the odd $k$ case we are left with the trivial group and in the even $k$ case we are left with $\Z$.
\end{proof}
\section{Concluding remarks}
The main result here was motivated by the conjecture of \L uczak and Peled about random simplicial complexes described earlier.  Based on Theorem \ref{maintheorem} it would seem plausible that an adaptation of the methods here could be used to show that $p = \omega(1/n)$ implies that $H_{d - 1}(Y)$ for $Y \sim Y_d(n, p)$ is torsion-free with high probability.

In another direction, it would be interesting to extend the model for $M(\mathcal{H})$ here to allow for the nonzero entries to come from a sequence other than $1, -1, 1, -1, ...$. For example, if we instead defined $M(\mathcal{H})$ by having the nonzero entries all be equal to 1, then clearly $\textbf{1}$ is an element of the kernel of $M^T$ for primes that divide $k$, but not over $\Q$, and so there would always be $k$-torsion even when $p = 1$. Nonetheless, it seems likely that in this version of the model one could show that for $p = \omega(1/n^{k - 1})$ with high probability there is no torsion other than the torsion which exists in $\coker(M(\mathcal{H}))$ for $\mathcal{H}$ the complete $k$-uniform hypergraph on $n$, which would be $\Z/k\Z$.

\section*{Acknowledgment}
The author thanks Elliot Paquette for helpful discussions at the beginning of the research for this article and for comments on an earlier draft.
\bibliography{ResearchBibliography}
\bibliographystyle{amsplain}
\end{document}